\theoremstyle{plain}
\newtheorem{theorem}{Theorem}[section]
\newtheorem{corollary}[theorem]{Corollary}
\newtheorem{lemma}[theorem]{Lemma}
\newtheorem{proposition}[theorem]{Proposition}
\newtheorem{conjecture}{Conjecture}
\theoremstyle{remark}
\newtheorem*{remark*}{Remark}
\newtheorem*{remarks*}{Remarks}
\theoremstyle{definition}
\newtheorem*{notation*}{Notation}
\newtheorem*{claim*}{Claim}
\newtheorem*{Conjecture1*}{Conjecture 1}
\newtheorem*{Conjecture2*}{Conjecture 2}
\newtheorem*{Conjecture3*}{Conjecture 3} 
\newtheorem*{Conjecture4*}{Conjecture 4}
\numberwithin{equation}{section}
\setlist[itemize]{noitemsep, topsep=0pt}
\newcommand{\vast}{\bBigg@{3}}
\newcommand{\Vast}{\bBigg@{5}}
\def\func#1{\mathop{\rm #1}}%
\begin{document}
\title{Polynomization of Sun's Conjecture}
\author{Bernhard Heim}
\address{Department of Mathematics and Computer Science\\Division of Mathematics\\University of Cologne\\ Weyertal 86--90 \\ 50931 Cologne \\Germany}
\address{Lehrstuhl A f\"{u}r Mathematik, RWTH Aachen University, 52056 Aachen, Germany}
\email{bheim@uni-koeln.de}
\email{bernhard.heim@rwth-aachen.de}
\author{Markus Neuhauser}
\address{Kutaisi International University, 5/7, Youth Avenue,  Kutaisi, 4600 Georgia}
\address{Lehrstuhl
f\"{u}r Geometrie und
Analysis, RWTH Aachen University, 52056 Aachen, Germany}
\email{markus.neuhauser@kiu.edu.ge}
\subjclass[2020] {Primary 05A20, 11P84; Secondary 05A17, 11B83}
\keywords{Inequalities, Partitions, Polynomials, Sequences}
\begin{abstract}
Let $p(n)$ denote
the number of partitions of a natural number $n$. 
As $ n \to \infty$, the $n$th root
of $p(n)$ tends to $1$,
which is related to the Cauchy--Hadamard test for power series. Andrews also discovered an elementary proof.
Sun conjectured that this happens in a certain way for $n\geq 6$:
\begin{equation*}
\sqrt[n]{p(n)} > \sqrt[n+1]{p(n+1)}.
\end{equation*}
The conjecture was proved by Wang and Zhu; shortly thereafter, Chen and Zheng independently obtained a second proof.
 In this paper, we follow an approach by
 Rota. We consider $p(n)$ as special values of the D'Arcais polynomials, known as the Nekrasov--Okounkov polynomials.  This identifies Sun's conjecture as a property of the largest real zero of certain polynomials. This leads to results towards $k$-coloured partitions, overpartitions, and plane partitions. Moreover, we also consider Chebyshev and Laguerre polynomials. The main purpose of this paper is to offer a
 uniform approach.
\end{abstract}
\maketitle
\section{Introduction and statement of results}
In this paper, we generalize Sun's conjecture \cite{Su13} on the decreasing monotonicity of the $n$th root of the partition function $p(n)$. Sun conjectured (\cite{Su13}, Conjecture 2.14 (2012-08-02)) that for all $n\geq 6$ we have
\begin{equation}\label{SunC}
\sqrt[n]{p(n)} > \sqrt[n+1]{p(n+1)}.
\end{equation}

The conjecture has been proven by Wang and Zhu \cite{WZ14}
and shortly later by Chen and Zheng \cite{CZ17}. Both proofs 
depend on the celebrated circle method of Hardy--Ramanujan and the Hardy--Ramanujan--Rademacher formula for $p(n)$ and are closely related to
the work by
Nicolas \cite{Ni78} and DeSalvo and Pak \cite{DP15}
on the log-concavity of $p(n)$.
In this paper, we take a new approach. 

In the spirit of Rota \cite{RS07}, we view Sun's conjecture as a claim on the location of
zeros of certain associated
polynomials. 
\begin{center}{\it
``The one contribution of mine that I hope will be remembered
has consisted in just pointing out that all sorts of problems of
combinatorics can be viewed as problems of location of the zeros of
certain polynomials and in giving these zeros a combinatorial interpretation.
This is now called the critical problem.''}
\end{center}
The approach provided in this paper is the following.
Let $g$ be a normalized arithmetic function with positive values.
We assign a sequence of polynomials $\{P_n^g(x)\}_{n\geq 0}$:
\begin{equation}\label{polynomials}
P_n^g(x):= \frac{x}{n} \sum_{k=1}^n g(k) P_{n-k}^g(x)
\end{equation}
with initial value $P_0^g(x)=1$. Then, the location of the largest real zero $x_n^g$ of
\begin{equation}\label{delta}
\Delta_n^g(x):= \left(P_n^g(x)\right)^{n+1}-\left(P_{n+1}^g(x)\right)^n,
\end{equation}
encodes the inequality (\ref{SunC}). 
\subsection{\label{abschn1.1}D'Arcais
polynomials}
The polynomials associated with
$g(n) = \sigma(n)=\sum_{d \mid
n}$ are the D'Arcais polynomials \cite{DA13,HN20}. In combinatorics and statistical mechanics, they are
called Nekrasov--Okounkov polynomials \cite{NO06,Ha10} and provide a
hook length formula. We have
\begin{equation*}\label{product}
\sum_{n=0}^{\infty} P_n^{\sigma} (x) \, t^n =
\prod_{n=1}^{\infty} \Big( 1 - t^n \Big)^{-x} =
\sum _{n=0}^{\infty }
\sum_{\lambda \vdash 
n} 
t^{\vert \lambda \vert} \,
\prod_{ h \in
{H}(\lambda)} \left(   1+ \frac{x-1}{h^2}\right).
\end{equation*}
Here, $\lambda 
\vdash n$ denotes that 
$\lambda $ is a partition 
of $n$ and 
${H}(\lambda)$ be the multiset of hook lengths associated
with $\lambda \vdash 
n$ and $|\lambda|$ is the weight of $\lambda$.
Therefore, $p(n)=P_n^{\sigma}(1)$, and more generally $P_n^{\sigma}(k)$
is the
number of $k$-colored partitions of $n$.
By Lemma \ref{lemma:largest}
we have
\begin{equation*}\label{infinity}
\lim_{x \to \infty} \Delta_n^g(x) = \infty.
\end{equation*}
In Figure \ref{figure1}, we have plotted
the largest real zeros of $\Delta_n^{\sigma}(x)$ for $1 \leq n \leq 25$.
\begin{figure}[H]
\includegraphics[width=.5\textwidth]{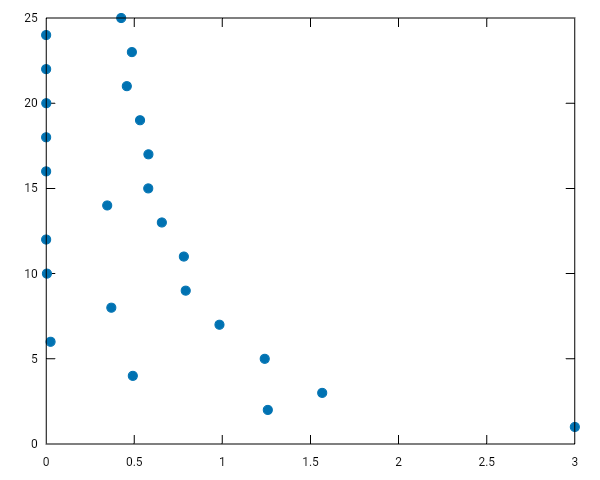}
\caption{\label{figure1}Largest real zeros of $\Delta_n^{\sigma}(x)$
for $1 \leq n \leq 25$}
\end{figure}
We extended
our calculations with the support of Steven Charlton and obtain that $x_n^{\sigma}<1$ for $6\leq n\leq 150$ and speculate that this
holds true for all $n\geq 6$. 
We refer to Section \ref{Section:Arcais} for a full picture of our results related to
partition numbers and the D'Arcais polynomials.
\subsection{Pochhammer und Laguerre
polynomials} \label{sect:pola}
Already simple input functions $g(n)$ lead to interesting results and give
evidence on the
significance of studying the largest real zeros $x_n^g$.
Let $\psi_{\ell}(n):= n^{\ell}$ for $\ell \in \mathbb{N}_0$.
We start with the case $\ell=0$, which is related to
the Pochhammer polynomials.
\begin{theorem} \label{Pochhammer}
Let $g(n)= \psi_0(n)$. Then we obtain
\begin{equation*}
P_n^{\psi_0}(x) = \frac{1}{n!} \prod_{k=0}^{n-1} \left( x+k \right)
\text{ and } x_n^{\psi_0}=\psi_0(2)=1 \text{ for } n\geq 2.
\end{equation*}
\end{theorem}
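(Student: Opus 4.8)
The plan is to first pin down the closed form for $P_n^{\psi_0}$ and then turn the zero-location statement into a single monotonicity comparison. For the closed form I would specialize the recursion \eqref{polynomials} to $g\equiv 1$, which reads $nP_n^{\psi_0}(x)=x\sum_{j=0}^{n-1}P_j^{\psi_0}(x)$. The slickest route is the generating function: setting $F(t)=\sum_{n\ge 0}P_n^{\psi_0}(x)t^n$, the recursion gives the separable relation $tF'(t)=x\frac{t}{1-t}F(t)$, i.e. $F'/F=x/(1-t)$, so with $F(0)=1$ one gets $F(t)=(1-t)^{-x}$; expanding by the binomial series yields $P_n^{\psi_0}(x)=\binom{x+n-1}{n}=\frac{1}{n!}\prod_{k=0}^{n-1}(x+k)$, which is the first assertion. (An equivalent self-contained induction uses the hockey-stick identity $\sum_{j=0}^{n-1}\binom{x+j-1}{j}=\binom{x+n-1}{n-1}$.)

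The decisive structural observation is the one-step ratio $P_{n+1}^{\psi_0}(x)=\frac{x+n}{n+1}\,P_n^{\psi_0}(x)$, immediate from the product formula. Substituting this into \eqref{delta} factors the polynomial as
\[
\Delta_n^{\psi_0}(x)=\bigl(P_n^{\psi_0}(x)\bigr)^{n}\Bigl[\,P_n^{\psi_0}(x)-\Bigl(\tfrac{x+n}{n+1}\Bigr)^{n}\Bigr].
\]
The first factor vanishes exactly at $x=0,-1,\dots,-(n-1)$, all of which are nonpositive, so the largest real zero must come from the bracket. A direct check gives $P_n^{\psi_0}(1)=\frac{n!}{n!}=1=\bigl(\frac{1+n}{n+1}\bigr)^{n}$, so $x=1$ is a zero of the bracket, hence of $\Delta_n^{\psi_0}$.

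It remains to show the bracket has no real zero exceeding $1$, which is the only genuine content. On $(0,\infty)$ both $A(x):=P_n^{\psi_0}(x)$ and $B(x):=\bigl(\frac{x+n}{n+1}\bigr)^{n}$ are positive, so I would study $\phi(x):=\log A(x)-\log B(x)$, noting $\phi(1)=0$. Differentiating gives $\phi'(x)=\sum_{k=0}^{n-1}\frac{1}{x+k}-\frac{n}{x+n}$, and since each of the $n$ summands satisfies $\frac{1}{x+k}>\frac{1}{x+n}$ for $0\le k\le n-1$, we get $\phi'(x)>0$ throughout $(0,\infty)$. Thus $\phi$ is strictly increasing with $\phi(1)=0$, so $\phi(x)>0$ for every $x>1$, i.e. $A(x)>B(x)$ and the bracket is strictly positive on $(1,\infty)$. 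Since the factor $\bigl(P_n^{\psi_0}\bigr)^{n}$ also has no zero there, no zero of $\Delta_n^{\psi_0}$ exceeds $1$, and therefore $x_n^{\psi_0}=1=\psi_0(2)$ for all $n\ge 2$ (in fact the same argument already works for $n\ge 1$). I do not expect a real obstacle: the only points requiring care are the strictness in the summand comparison and the positivity of $A,B$ that legitimizes taking logarithms, after which the monotonicity of $\phi$ does all the work.
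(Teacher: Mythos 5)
Your proposal is correct and follows essentially the same route as the paper: the identical factorization $\Delta_n^{\psi_0}(x)=\bigl(P_n^{\psi_0}(x)\bigr)^{n}\bigl[P_n^{\psi_0}(x)-\bigl(\tfrac{x+n}{n+1}\bigr)^{n}\bigr]$, the observation that the first factor only contributes nonpositive zeros, and the comparison of the $n$ factors $\tfrac{x+k}{k+1}$ against $\tfrac{x+n}{n+1}$ for $x>1$. The only cosmetic differences are that you supply an explicit generating-function derivation of the Pochhammer closed form (which the paper takes as known) and phrase the final termwise comparison via the logarithmic derivative $\phi'(x)=\sum_{k=0}^{n-1}\tfrac{1}{x+k}-\tfrac{n}{x+n}>0$ rather than comparing the positive factors directly.
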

The proof is given in Section \ref{ell=0}. This result implies
that $P_n^{\psi_0}(x)^{n+1}> P_{n+1}^{\psi_0}(x)^{n}$
for all $x>1$ and $n \geq 2$. 
More involved is the next case, $g(n)=\psi_1(n)$, which is related to orthogonal polynomials, the $\alpha$-associated Laguerre polynomials for $\alpha=1$:
\begin{equation*}\label{alpha=1}
L_n^{(\alpha)}(x) :=  \sum_{k=0}^n \binom{n+\alpha}{n-k}  \frac{(-x)^k}{k!}.
\end{equation*}
From \cite{HLN19}, we have 
$P_n^{\psi_1}(x) = \frac{x}{n} L_{n-1}^{(1)} (-x)$.
Utilizing a
Tur\'{a}n inequality of $P_n^{\psi_1}(x)$ \cite{HNT22} and an
initial property of $\Delta_n^{\psi_1}(x)$ we will prove in Section \ref{ell=1}
the following result.
\begin{theorem} \label{th:ell=1}
Let $g(n)= \psi_1(n)$. Then $P_n^{\psi_1}(x) = \frac{x}{n} L_{n-1}^{(1)} (-x)$ for $n\geq 1$. The largest real
zero of $\Delta_n^{\psi_1}(x)$ satisfies $x_n^{\psi_1} \leq
{\psi_1}(2)=2$ for all $n\in \mathbb{N}$. Further, for all $n \geq 6$, we have $x_n^{\psi_1} \leq 1$.
\end{theorem}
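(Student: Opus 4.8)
The plan is to reduce all three assertions to the sign of the single quantity
\[
S_n(x) := (n+1)\log P_n^{\psi_1}(x) - n\log P_{n+1}^{\psi_1}(x),
\]
which is well defined for $x>0$: by the identity $P_n^{\psi_1}(x)=\tfrac{x}{n}L_{n-1}^{(1)}(-x)$ and the fact that $L_{n-1}^{(1)}(-x)$ has positive coefficients, every $P_n^{\psi_1}(x)$ is positive on $(0,\infty)$. For $x>0$ we then have $\Delta_n^{\psi_1}(x)>0$ if and only if $S_n(x)>0$, and since $\Delta_n^{\psi_1}(x)\to+\infty$ by Lemma~\ref{lemma:largest}, the bound $x_n^{\psi_1}\le c$ is equivalent to $S_n(x)>0$ for all $x>c$. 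I would dispose of the identity in part~(1) either by citing \cite{HLN19} or by checking that $\tfrac{x}{n}L_{n-1}^{(1)}(-x)$ satisfies the recurrence \eqref{polynomials} for $g=\psi_1$ via the three-term recurrence of the associated Laguerre polynomials.

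The engine of the argument is the monotonicity of $S_n$ in $n$. A direct computation gives
\[
S_n(x)-S_{n-1}(x) = n\,\log\frac{\big(P_n^{\psi_1}(x)\big)^2}{P_{n-1}^{\psi_1}(x)\,P_{n+1}^{\psi_1}(x)},
\]
so the Tur\'an inequality $\big(P_n^{\psi_1}(x)\big)^2>P_{n-1}^{\psi_1}(x)\,P_{n+1}^{\psi_1}(x)$ of \cite{HNT22}, valid for $x>0$ and $n\ge2$, shows $S_n(x)>S_{n-1}(x)$ there. Hence for each fixed $x>0$ the sequence $\big(S_n(x)\big)_{n\ge1}$ is strictly increasing, so that $S_{n_0}(x)>0$ already forces $S_n(x)>0$ for every $n\ge n_0$. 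This single observation reduces each bound to one base index.

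For $x_n^{\psi_1}\le 2$ the base index is $n_0=1$: one computes $S_1(x)=\log\tfrac{2x}{x+2}$, which is positive exactly for $x>2$, and monotonicity then gives $S_n(x)>0$ for all $n$ and all $x>2$; the equality at $n=1$ shows the bound is attained. For $x_n^{\psi_1}\le1$ with $n\ge6$ the base index is $n_0=6$: it suffices to prove the single inequality $S_6(x)>0$ for all $x>1$, equivalently $\big(P_6^{\psi_1}(x)\big)^7>\big(P_7^{\psi_1}(x)\big)^6$ on $(1,\infty)$, after which monotonicity propagates positivity to all $n\ge6$. This base case is the \emph{initial property} of $\Delta_n^{\psi_1}$ on which the theorem rests, and it is exactly where the threshold enters: at $n=5$ one checks $\Delta_5^{\psi_1}(1)<0$, so no statement of this form can hold below $n=6$.

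The hard part will be this base case $n_0=6$. Since the range $(2,\infty)$ is already covered by the first bound, one only needs $\Delta_6^{\psi_1}(x)>0$ on $(1,2]$; but $\Delta_6^{\psi_1}$ is an explicit polynomial of degree $42$, and establishing its positivity throughout $(1,2]$ is a finite yet delicate task, best handled by a Sturm sequence or by interval arithmetic rather than by hand. A secondary, more routine point is to invoke the Tur\'an inequality of \cite{HNT22} in precisely the range $x>0$, $n\ge2$ demanded by the monotonicity of $S_n$; the first instances are transparent, since a short computation yields $\big(P_2^{\psi_1}\big)^2-P_1^{\psi_1}P_3^{\psi_1}=x^4/12$ and $\big(P_3^{\psi_1}\big)^2-P_2^{\psi_1}P_4^{\psi_1}=x^4(x^2+6x+12)/144$, both manifestly positive on $(0,\infty)$.
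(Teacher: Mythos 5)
Your argument is correct and is essentially the paper's own proof in logarithmic disguise: the monotonicity of $S_n(x)$ in $n$ via the Tur\'an inequality of \cite{HNT22} is exactly the log-concavity-with-initial-condition mechanism of Proposition~\ref{prop: strategy}, and your base cases ($S_1(x)=\log\frac{2x}{x+2}$ for the bound $x_n^{\psi_1}\le 2$, and $\Delta_6^{\psi_1}>0$ on $(1,2]$ for the bound $x_n^{\psi_1}\le 1$, $n\ge 6$) coincide with the initial conditions the paper invokes. The one step you leave to a Sturm-sequence or interval-arithmetic check --- positivity of $\Delta_6^{\psi_1}$ on $(1,2]$ --- is precisely the step the paper also delegates to a finite computation (Figure~\ref{figure:Laguerre}), so you are, if anything, more explicit about where the nontrivial verification lies.
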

This implies that for all $x \geq 1$ and $n\geq 6$:
\begin{equation*}
\frac{x^{n+1}}{n^{n+1}} \, L_{n-1}^{(1)}(-x)^{n+1} >  
\frac{x^{n}}{(n+1)^{n}} \, L_{n}^{(1)}(-x)^{n}.
\end{equation*}
It would be interesting to invest in this new type of inequality for
other orthogonal polynomials, such
as Chebyshev and Hermite polynomials.
\subsection{Plane
partitions and
overpartitions}
\subsubsection{Plane
partitions}
Recently, Ono, Pujahari, and Rolen \cite{OPR22} proved a conjecture
\cite{HNT23} on the log-concavity of the sequence of plane 
partition numbers $\{pp(n)\}_n$ for $n\geq 12$. This strongly suggests to 
get similiar results for associated
polynomials $P_n^{\sigma_2}(x)$ as we get for D'Arcais polynomials $P_n^{\sigma}(x)$. Nevertheless, we have, at the moment, only partial results, although our numerical data draws a clear picture.
Plane partitions are fascinating generalizations of 
integer partitions
(\cite{St99}, Section 7.20). Andrews \cite{An98}
also introduced
plane partitions in the context of higher-dimensional partitions.

We follow the introduction on plane partitions provided
in \cite{HNT23}.
A plane partition $\pi$ of $n$ is an array 
$\pi = \left( \pi_{ij} \right)_{i,j \geq 1}$ of non-negative integers $\pi_{ij}$,
with finite sum $\vert \pi \vert := \sum_{i,j\geq
1} \pi_{ij}=n$, which is weakly decreasing in rows and columns.
It can be considered as the filling of a Ferrers diagram with weakly decreasing rows and columns,
where the sum of all these numbers
equals
$n$.
Let the numbers in the filling represent the 
heights for stacks of blocks placed
at the corresponding cell
of the diagram (Figure \ref{cube}). We refer to \cite{HNT23}.
This is a natural generalization of the concept of
classical partitions \cite{An98, On04}.
%
\begin{figure}[H]
\begin{minipage}{0.35\textwidth}
\begin{equation*}
\phantom{xxx}
\young(5443321,432,21) 
\end{equation*}
\end{minipage}
$\longrightarrow$ \phantom{xx}
\begin{minipage}{0.4\textwidth}\phantom{xx}
\includegraphics[width=0.75\textwidth]{./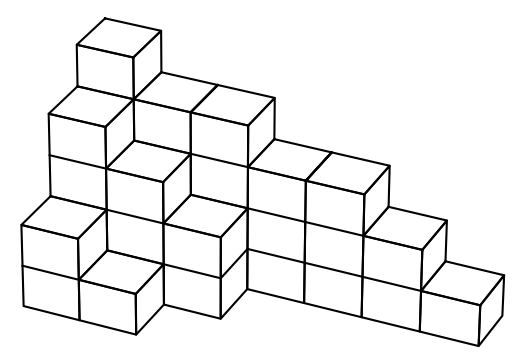}  
\end{minipage}
\caption{\label{cube}Representation of plane partitions}
\end{figure}
Let $\mathop{pp}\left( n\right) $
denote
the number of plane partitions of $n$. Similarly
as Euler discovered the
generating function for the partition numbers $p(n)$, MacMahon discovered
the generating function for the plane partitions $\mathop{pp}\left( n\right)$:
\begin{equation*}
\prod_{n=1}^{\infty} \left( 1 - q^n\right)^{-n} = \sum_{n=0
}^{\infty} 
\mathop{pp}\left( n\right) \, q^n.
\end{equation*}
Therefore, let $g(n)= \sigma_2(n)$, where $\sigma_{\ell}(n):= \sum_{d \mid
n} d^{\ell}$. Then we have
$\mathop{pp}\left( n\right) = P_n^{\sigma_2}(1)$. 
We examine
the first values of $\sqrt[n]{\mathop{pp}\left( n\right) }$, see
Table \ref{R}.
\begin{table}[H]
\[
\begin{array}{c|cccccccccc}
\hline
n&1&2&3&4&5&6&7&8&9&10\\ \hline \hline
\sqrt[n]{\mathop{pp}
\left( n\right) }
&1 &1.732    &1.817
&1.899
&1.888&
1.906&1.890
&1.886
&1.872
& 1.862
\\ \hline
\end{array}
\]
\caption{\label{R}Approximative values of $\sqrt[n]{\mathop{pp}\left( n\right) }$}
\end{table}
Further, we have plotted
the largest real
zeros of $\Delta_n^{\sigma_2}(x)$ for $1 \leq n \leq 25$ (see Figure \ref{figure:plane}).
Then it is obvious to expect:
\begin{conjecture}
Let $g(n)$ be equal to $\sigma_{2}(n)$. Then
we have that 
$\Delta_n^{\sigma_2}(x)>0$ for $x \geq g
\left( 2
\right) =5$ and all $n \in \mathbb{N}$. 
Moreover, for $n \geq 6$, we already have $\Delta_n^{\sigma_2}(x)>0$ for $x \geq 1$.
\end{conjecture}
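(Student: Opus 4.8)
The plan is to treat the two assertions separately after a common reduction. Since $g=\sigma_2$ is positive and $g(1)=1$, the recursion \eqref{polynomials} gives $P_n^{\sigma_2}(x)>0$ for every $x>0$, so for such $x$ the sign of $\Delta_n^{\sigma_2}(x)$ is governed by $(n+1)\log P_n^{\sigma_2}(x)-n\log P_{n+1}^{\sigma_2}(x)$; that is, $\Delta_n^{\sigma_2}(x)>0$ is equivalent to the strict decrease of $k\mapsto P_k^{\sigma_2}(x)^{1/k}$ at $k=n$. Writing $u_k:=P_{k+1}^{\sigma_2}(x)/P_k^{\sigma_2}(x)$, this reads $P_n^{\sigma_2}(x)>u_n^{\,n}$. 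The key observation I would exploit is that if the finite sequence $P_0^{\sigma_2}(x),\dots,P_{n+1}^{\sigma_2}(x)$ is strictly log-concave in the index (a Tur\'an-type inequality, as in \cite{HNT22} for the Laguerre case), then $u_0>u_1>\dots>u_n$, whence $P_n^{\sigma_2}(x)=\prod_{k=0}^{n-1}u_k>u_{n-1}^{\,n}>u_n^{\,n}$ and $\Delta_n^{\sigma_2}(x)>0$ follows at once. Thus everything reduces to proving such log-concavity on the relevant ranges of $x$, together with an analysis of the finitely many indices where it may fail.

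For the weaker bound $x\ge g(2)=5$ I would argue that the largest real zero is non-increasing in $n$ and already equals $g(2)$ at $n=1$. A direct computation from \eqref{polynomials} gives $\Delta_1^{\sigma_2}(x)=\tfrac12 x(x-5)$, so $x_1^{\sigma_2}=g(2)=5$; this explains the appearance of $g(2)$ and shows that at $(n,x)=(1,5)$ the claimed strict inequality is in fact an equality. The plan is then to prove by induction on $n$ that $P_n^{\sigma_2}(x)^{1/n}$ is decreasing for $x\ge 5$, the inductive step following from the log-concavity mechanism above. I expect this to be comparatively accessible in the regime $x\ge 5$, because there the top-degree term $x^n/n!$ of $P_n^{\sigma_2}(x)$ is strongly dominant and the polynomials behave like the Pochhammer and Laguerre models of Theorems \ref{Pochhammer} and \ref{th:ell=1}.

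The sharp bound $x_n^{\sigma_2}\le 1$ for $n\ge6$ is the substantial part, and here I would import the generating-function asymptotics. A Meinardus-type saddle-point analysis of $\sum_n P_n^{\sigma_2}(x)\,t^n=\prod_{m\ge1}(1-t^m)^{-mx}$, whose Dirichlet series $x\,\zeta(s-1)$ has a simple pole at $s=2$ with residue $x$, yields $\log P_n^{\sigma_2}(x)=\tfrac32(2\zeta(3))^{1/3}x^{1/3}n^{2/3}+O(\log n)$. Consequently $\tfrac1n\log P_n^{\sigma_2}(x)$ has leading term $\propto x^{1/3}n^{-1/3}$, which is strictly decreasing in $n$; so for each fixed $x>0$ one obtains $\Delta_n^{\sigma_2}(x)>0$ for all large $n$, and at $x=1$ this is the plane-partition analogue $\sqrt[n]{\mathop{pp}(n)}>\sqrt[n+1]{\mathop{pp}(n+1)}$ underpinned by the log-concavity theorem of Ono--Pujahari--Rolen \cite{OPR22}. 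To reach the uniform statement I would (i) upgrade these asymptotics to an effective Tur\'an inequality for $\{P_k^{\sigma_2}(x)\}_k$ valid for all $k\ge k_0$ and all $x\ge1$, feeding it into the mechanism of the first paragraph, and (ii) dispose of the finitely many pairs $(n,x)$ with $6\le n<k_0$ by treating each $\Delta_n^{\sigma_2}(x)$ as an explicit polynomial and showing, via a Sturm or coefficient-sign analysis, that it has no real zero in $[1,\infty)$.

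The main obstacle is uniformity in $x$ over the unbounded range $[1,\infty)$. Meinardus' theorem is a fixed-$x$ statement; as $x\to\infty$ the saddle point degenerates and the error terms escape the standard method, so the asymptotic regime and the large-$x$ ``leading coefficient'' regime of the second paragraph must be glued together along a moving threshold, leaving a crossover band of moderate $n$ and moderate $x$. It is precisely in this band that I expect the real work: one must prove the Tur\'an (log-concavity) inequality for $P_n^{\sigma_2}(x)$ simultaneously for all $x\ge1$ and all $n\ge k_0$, and since no closed form is available one is forced to bootstrap log-concavity directly from the recursion \eqref{polynomials}, using the saddle-point estimates only as input. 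Establishing this single uniform-in-$x$ Tur\'an inequality is, in my view, the crux on which a full proof of the conjecture turns.
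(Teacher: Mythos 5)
First, a point of calibration: the statement you are proving is presented in the paper as a \emph{conjecture}, and the paper does not prove it. What the paper establishes is only the instance $x=1$, $n\ge 6$ (Theorem \ref{th:plane}), by the Wang--Zhu ``log-concavity with initial condition'' device: Ono--Pujahari--Rolen \cite{OPR22} give log-concavity of $pp(n)=P_n^{\sigma_2}(1)$ for $n\ge 12$, Proposition \ref{prop: strategy} converts monotone quotients into monotone $n$th roots once the single initial inequality $R(n_0-1)>R(n_0)$ holds, and the finitely many remaining indices are checked numerically (Figure \ref{figure:plane}). Your first paragraph is exactly this mechanism, except that you ask for strict log-concavity of $P_0^{\sigma_2}(x),\dots,P_{n+1}^{\sigma_2}(x)$ from the index $0$ on; that premise already fails at $x=1$ (e.g.\ $pp(3)^2=36<39=pp(2)\,pp(4)$), so you must fall back on the initial-condition variant, at which point, for $x=1$, your route coincides with the paper's. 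Your observation that $\Delta_1^{\sigma_2}(5)=\tfrac52\cdot(5-5)=0$ is correct and worth keeping: it shows the first assertion of the conjecture fails in the strict form at $(n,x)=(1,5)$ and must be read as $\Delta_n^{\sigma_2}(x)\ge 0$ or restricted to $n\ge 2$, consistent with the paper's formula $\Delta_1^g(x)=\tfrac x2\left(x-g(2)\right)$.

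Beyond $x=1$, the proposal has gaps that are not merely technical. First, the claim that the largest real zero $x_n^{\sigma_2}$ is non-increasing in $n$ is unsupported and contradicted by the computed data (the zeros oscillate for small $n$); nothing in the recursion \eqref{polynomials} forces such monotonicity, so the $x\ge 5$ assertion does not reduce to the $n=1$ computation, and the promised induction for $x\ge5$ is never carried out. Second, the crux you correctly isolate --- a Tur\'an/log-concavity inequality for $\{P_k^{\sigma_2}(x)\}_k$ uniform in $x\ge 1$ --- is precisely the open Conjecture 2 of \cite{HNT22} that the paper itself names as the missing ingredient; a Meinardus saddle-point analysis at fixed $x$ only yields $\Delta_n^{\sigma_2}(x)>0$ for $n\ge N(x)$ with no control of $N(x)$ as $x\to\infty$, and the gluing with the large-$x$ leading-coefficient regime is asserted, not performed. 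Third, even granting such a uniform Tur\'an inequality from some $k_0$ on, Proposition \ref{prop: strategy} still requires the initial condition $R(k_0-1)>R(k_0)$ for every $x\ge1$, an unbounded family of verifications that a Sturm-sequence check of finitely many polynomials does not cover. So the proposal is a reasonable research program --- essentially the one the paper advocates, in the spirit of \cite{WZ14} --- but it does not prove the conjecture, and only the $x=1$ case should be regarded as established.
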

\begin{figure}[H]
\includegraphics[width=.5\textwidth]{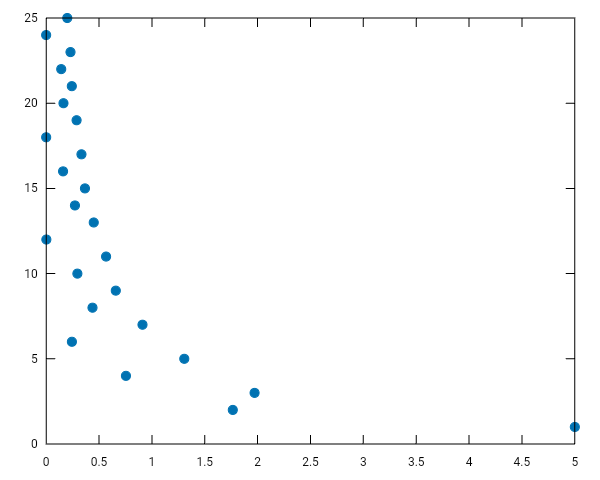}
\caption{\label{figure:plane}Largest real zeros of $\left( P_{n}^{\sigma_2}\left( x\right) \right) ^{n+1}-\left( P_{n+1}^{\sigma_2}\left( x\right) \right) ^{n}$ with $n$ vertical}
\end{figure}
We prove the conjecture for $x=1$ utilizing the log-concavity of 
$\mathop{pp}\left( n\right)$ for $n\geq 12$ by a similar argument as
given in \cite{WZ14}.
\begin{theorem} \label{th:plane}
The sequence
$\left\{ \sqrt[n]{\mathop{pp}\left( n\right) }\right\} _{n}$
is strictly decreasing for $n\geq 6$.
\end{theorem}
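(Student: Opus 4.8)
The plan is to recast the assertion $\sqrt[n]{\mathop{pp}(n)} > \sqrt[n+1]{\mathop{pp}(n+1)}$ in additive form and reduce it to the log-concavity of $\{\mathop{pp}(n)\}$ established by Ono, Pujahari, and Rolen \cite{OPR22}, exactly as Wang and Zhu \cite{WZ14} did for $p(n)$. Writing $L_n := \log \mathop{pp}(n)$ and $r_k := \mathop{pp}(k+1)/\mathop{pp}(k)$, the inequality is equivalent to $\tfrac{1}{n}L_n > \tfrac{1}{n+1}L_{n+1}$, i.e.\ to $L_n > n\log r_n$. Since $\mathop{pp}(0)=1$ we have $L_0 = 0$ and hence $L_n = \sum_{k=0}^{n-1}\log r_k$, so the claim becomes
\[
S_n := \sum_{k=0}^{n-1}\bigl(\log r_k - \log r_n\bigr) > 0 \qquad (n \geq 6).
\]

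First I would record the consequence of log-concavity. The result of \cite{OPR22} gives $\mathop{pp}(n)^2 > \mathop{pp}(n-1)\mathop{pp}(n+1)$ for $n \geq 12$, which says precisely that the ratio sequence $\{r_m\}$ is strictly decreasing for $m \geq 11$. Consequently, for every $n \geq 12$ and every $k$ with $11 \leq k \leq n-1$ one has $r_k > r_n$, so each term in the tail of $S_n$ is strictly positive. Splitting
\[
S_n = \sum_{k=0}^{10}\bigl(\log r_k - \log r_n\bigr) + \sum_{k=11}^{n-1}\bigl(\log r_k - \log r_n\bigr),
\]
the second sum is positive, while the first telescopes to $\sum_{k=0}^{10}\log r_k - 11\log r_n = \log\mathop{pp}(11) - 11\log r_n$. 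Because $\{r_m\}$ is decreasing for $m \geq 11$ and $n \geq 12$, we have the uniform bound $r_n \leq r_{12}$, so it suffices to verify the single numerical inequality $\log\mathop{pp}(11) > 11\log r_{12}$, equivalently $\mathop{pp}(11)\,\mathop{pp}(12)^{11} > \mathop{pp}(13)^{11}$; with $\mathop{pp}(11)=859$, $\mathop{pp}(12)=1479$, $\mathop{pp}(13)=2485$ this holds with a comfortable margin. This settles all $n \geq 12$.

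For the remaining finitely many cases $6 \leq n \leq 11$ I would simply check $\mathop{pp}(n)^{n+1} > \mathop{pp}(n+1)^{n}$ directly from the tabulated values of $\mathop{pp}$; a short computation confirms each one (and, as a consistency check, the inequality first fails at $n=5$, matching the threshold $n \geq 6$ in the statement).

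The main obstacle is boundary bookkeeping rather than any deep estimate: one must locate precisely the index from which $\{r_m\}$ is monotone, control the finitely many pre-monotone ratios $r_0,\dots,r_{10}$ against the uniform bound $r_n \leq r_{12}$, and confirm that the resulting numerical margin in $\log\mathop{pp}(11) > 11\log r_{12}$ is genuinely positive. Were that margin too thin, one would push the split to a larger initial segment, using log-concavity beyond $n=12$ to sharpen the bound on $r_n$; but the computed gap is ample, so the elementary split suffices.
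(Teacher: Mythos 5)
Your argument is correct and takes essentially the same route as the paper: both rest on the Ono--Pujahari--Rolen log-concavity of $\mathop{pp}(n)$ for $n\geq 12$, a single numerical anchor around $n=11,12$, and a direct verification of the cases $6\leq n\leq 11$; your telescoping sum $S_n$ is just an unrolled form of the induction in the paper's Proposition \ref{prop: strategy} (the Wang--Zhu ``log-concavity method with initial condition''). The only quibble is the parenthetical remark that the inequality ``first fails at $n=5$'': it also fails at $n=1,2,3$ (and holds at $n=4$), but this consistency check carries no weight in the proof.
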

\subsubsection{Overpartitions}
In our investigation of overpartitions, we have observed analogous patterns.
An overpartition of $n$ is a non-increasing sequence of natural numbers
that sum to $n$ where the
last occurrence of a number
may be overlined \cite{CL04}.
We denote the number of overpartitions of $n$
by $\bar{p}(n)$. 
The following generating function is well-known:
\begin{equation*}
\sum_{n=0}^{\infty} \bar{p}(n) \, t^n = \prod_{n=1}^{\infty}
\frac{ 1 + t^n}{1-t^n} = 1 + 2t+4t^2 + 8 t^3 + 14 t^4+ \cdots .
\end{equation*}
We offer the following polynomization
\begin{equation*} \label{identity}
\sum_{n=0}^{\infty}   P_n^{\bar{g}}(x) \,t^n = \left(   
\frac{\prod_{n=1}^{\infty} ( 1+ t^{n})}{\prod_{n=1}^{\infty} (1-t^n)}\right)^{\frac{x}{2}} 
= \left( \sum_{n=-\infty}^{\infty} (-1)^n \, t^{n^2} \right)^{- \frac{x}{2}}.
\end{equation*}
In Table \ref{Pn}, we
provide several values of $P_n^{\bar{g}}(x)$.
Note that $\bar{p}(n) = P_{n}^{\bar{g}}(2)$ denotes the number of overpartitions of $n$ and more generally
$P_{n}^{\bar{g}}(2k)$
corresponds to the number of
$k$-colored
overpartitions of $n$ for $k \in \mathbb{N}$.
\begin{table}[H]
\[
\begin{array}{c|ccccccc}
\hline
n & 1 & 2 & 3 & 4 & 5 & 6 & 7 \\ \hline \hline
P_{n}^{\bar{g}}\left( -2\right) & -2& 0& 0& 2& 0& 0& 0 \\
P_{n}^{\bar{g}}\left( -1\right) &-1
&-\frac{1}{2}
&-\frac{1}{2}
&\frac{3}{8}
&\frac{1}{8}
&\frac{3}{16}
&\frac{7}{16} \\
P_{n}^{\bar{g}}\left( 1\right) &1
&\frac{3}{2}
&\frac{5}{2}
&\frac{27}{8}
&\frac{39}{8}
&\frac{111}{16}
&\frac{149}{16} \\
P_{n}^{\bar{g}}\left( 2\right) & 2& 4& 8& 14& 24& 40& 64 \\
\hline
\end{array}
\]
\caption{\label{Pn} Values $P_n^{\bar{g}}(x)$}
\end{table}
The
distribution of the largest zeros $x_n^{\bar{g}}$
(see Figure \ref{overpart})
suggests that $x_n^{\bar{g}}<1$ for $n \geq 6$.
\begin{figure}[H]
\includegraphics[width=.5\textwidth]{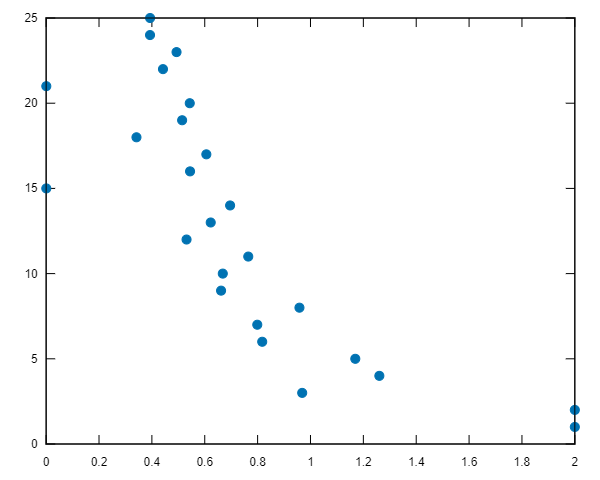}
\caption{\label{overpart}Largest real zeros of $\Delta_n^{\bar{g}}(x)$ for $1 \leq n \leq 25$}
\end{figure}
For $x=2$, we prove:
\begin{theorem} \label{th:over}
We have that $\Delta_1^{\bar{g}}(2) 
= \Delta_2^{\bar{g}}(2)=
0$. Let $n \geq 3$.
Then
\begin{equation*}
\bar{p}(n)^{n+1} > \left(\bar{p}
\left( {n+1}\right) \right)^n.
\end{equation*}
Therefore $\Delta_n^{\bar{g}}(2)>0$ for $n \geq 3$. 
\end{theorem}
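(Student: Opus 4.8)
The plan is to recognize the statement as Sun's monotonicity for the overpartition sequence and to deduce it from the (weak) log-concavity of $\bar{p}(n)$. Since $\bar{p}(n) = P_n^{\bar{g}}(2)$, we have $\Delta_n^{\bar{g}}(2) = \bar{p}(n)^{n+1} - \bar{p}(n+1)^n$, so the two boundary identities $\Delta_1^{\bar{g}}(2) = \Delta_2^{\bar{g}}(2) = 0$ are immediate from the first values $\bar{p}(0)=1$, $\bar{p}(1)=2$, $\bar{p}(2)=4$, $\bar{p}(3)=8$: indeed $2^2 = 4^1$ and $4^3 = 8^2$. For $n \geq 3$ the asserted inequality $\bar{p}(n)^{n+1} > \bar{p}(n+1)^n$ is, after taking the $n(n+1)$-th root, equivalent to $\sqrt[n]{\bar{p}(n)} > \sqrt[n+1]{\bar{p}(n+1)}$, i.e.\ to the strict decrease of $n \mapsto \tfrac{1}{n}\log\bar{p}(n)$.

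First I would pass to logarithmic differences. Writing $\delta_k := \log\bigl(\bar{p}(k)/\bar{p}(k-1)\bigr)$ and using $\bar{p}(0)=1$, one has $\log\bar{p}(n) = \sum_{k=1}^{n}\delta_k$. A direct rearrangement gives
\begin{equation*}
(n+1)\log\bar{p}(n) - n\log\bar{p}(n+1) = \log\bar{p}(n) - n\,\delta_{n+1} = \sum_{k=1}^{n}\bigl(\delta_k - \delta_{n+1}\bigr),
\end{equation*}
so it suffices to prove that this sum is strictly positive for every $n \geq 3$. The same identity shows the sum vanishes for $n=1,2$ (since $\delta_1=\delta_2=\delta_3=\log 2$), recovering the boundary cases.

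The key input is the log-concavity of the overpartition function, $\bar{p}(k)^2 \geq \bar{p}(k-1)\bar{p}(k+1)$, which is exactly the statement that the sequence $(\delta_k)_k$ is non-increasing. Granting this, every summand satisfies $\delta_k - \delta_{n+1} \geq 0$, and it remains to exhibit one strictly positive term. For $n \geq 3$ monotonicity of $(\delta_k)$ gives $\delta_{n+1} \leq \delta_4 = \log\tfrac{7}{4} < \log 2 = \delta_1$, so the $k=1$ summand is strictly positive; hence the whole sum is $>0$ and the theorem follows.

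The real work, and the main obstacle, is establishing the log-concavity of $\bar{p}(n)$ that drives the argument. I would proceed as in the treatment of $p(n)$ by Wang--Zhu and of $pp(n)$ in Theorem \ref{th:plane}: verify the finitely many small cases $3 \leq n \leq N_0$ by direct computation, and for $n \geq N_0$ derive $\bar{p}(k)^2 > \bar{p}(k-1)\bar{p}(k+1)$ from the Hardy--Ramanujan--Rademacher-type asymptotic $\bar{p}(n) \sim \tfrac{1}{8n}e^{\pi\sqrt{n}}$ together with effective error bounds, so that the second difference of $\log\bar{p}(n) \approx \pi\sqrt{n} - \log(8n)$ is controlled by the concavity of $\sqrt{n}$. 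Equivalently, one may simply cite the known log-concavity of the overpartition function. Securing this asymptotic estimate with explicit constants is the one genuinely analytic step; the combinatorial reduction above is then elementary.
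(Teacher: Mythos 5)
Your proposal is correct and follows essentially the same route as the paper: both reduce the claim to the log-concavity of $\bar{p}(n)$ (which the paper imports directly from Engel's theorem rather than re-deriving it from Rademacher-type asymptotics) together with a check of the first few values, your telescoping identity $\sum_{k=1}^{n}(\delta_k-\delta_{n+1})$ being just an unrolled form of the paper's Proposition~\ref{prop: strategy}. The only advice is to cite Engel for the log-concavity outright, since that removes the one ``genuinely analytic step'' you flag as the main obstacle.
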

\section{Preliminaries}
\subsection{Basic properties of sequences}
Let $\{\alpha(n)\}_{n=0}^{\infty}$ be a sequence of positive real numbers.
We consider roots and quotients of sequence members. This has already been
a successful approach in analysis finding the radius of convergence in the associated power series $\sum_{n=0}^{\infty} \alpha(n)\, t^n$:
\begin{equation*}
R_{\alpha}(n):= \sqrt[n]{\alpha(n)}\,\, \text{ and }\,\, Q_{\alpha}(n):= \frac{\alpha(n)}{\alpha(n-1)}.
\end{equation*}
Then we obtain for $n\geq 1$:
\begin{equation}\label{equalities}
R_{\alpha }\left( n
\right) >Q_{\alpha }\left( n
\right) \Leftrightarrow R_{\alpha }\left( n
-1\right) >R_{\alpha }\left( n
\right) \Leftrightarrow R_{\alpha }\left( n-1\right) >Q_{\alpha }\left( n
\right) .
\end{equation}
A sequence is
called log-concave for $n \geq n_0$ if
\begin{equation*}
\alpha(n)^2 - \alpha(n-1) \alpha(n+1) \geq 0
\end{equation*}
for all $n \geq n_0$. It is
called strictly log-concave if the inequality is strict.
\begin{proposition}\label{prop: strategy}
Let the sequence $\{\alpha(n)\}_n$ of positive real numbers be log-concave 
for all $n \geq n_0$ for $n_0 \in \mathbb{N}$. Then, the initial condition
\begin{equation}\label{special}
R_{\alpha }\left( n_{0} -1\right)
> R_{\alpha }\left( n_{0}\right)
\end{equation}
implies that the sequence $\{\sqrt[n]{\alpha(n)}\}_{n= n_0-1}^{\infty}$ is strictly decreasing.
\end{proposition}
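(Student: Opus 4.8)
The plan is to prove the equivalent statement that $R_{\alpha}(n-1) > R_{\alpha}(n)$ holds for every $n \geq n_0$, by induction on $n$. The base case $n = n_0$ is precisely the hypothesis \eqref{special}, so the entire argument reduces to propagating this single strict inequality forward, using log-concavity together with the equivalences recorded in \eqref{equalities}.

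First I would reformulate the log-concavity assumption as a monotonicity statement for the quotients. Dividing the inequality $\alpha(n)^2 \geq \alpha(n-1)\,\alpha(n+1)$ by the positive quantity $\alpha(n)\,\alpha(n-1)$ yields $Q_{\alpha}(n) \geq Q_{\alpha}(n+1)$ for every $n \geq n_0$; that is, the sequence $\{Q_{\alpha}(n)\}_{n \geq n_0}$ is non-increasing. This is the only use I will make of the log-concavity hypothesis.

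For the inductive step, I assume $R_{\alpha}(n-1) > R_{\alpha}(n)$ for some $n \geq n_0$. Applying the equivalence in \eqref{equalities} at index $n$ converts this into $R_{\alpha}(n) > Q_{\alpha}(n)$. Combining with the quotient monotonicity $Q_{\alpha}(n) \geq Q_{\alpha}(n+1)$, valid since $n \geq n_0$, gives $R_{\alpha}(n) > Q_{\alpha}(n+1)$. Finally, reading \eqref{equalities} at index $n+1$ in the form $R_{\alpha}(n) > Q_{\alpha}(n+1) \Leftrightarrow R_{\alpha}(n) > R_{\alpha}(n+1)$ delivers the desired strict inequality $R_{\alpha}(n) > R_{\alpha}(n+1)$, which closes the induction and shows that $\{\sqrt[n]{\alpha(n)}\}_{n=n_0-1}^{\infty}$ is strictly decreasing.

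The argument is essentially mechanical once these two ingredients are assembled, so I do not anticipate a genuine obstacle; the only point requiring care is the bookkeeping of indices. Specifically, I must check that log-concavity is invoked solely at an index $n \geq n_0$ (which the inductive range guarantees) and that the two instances of \eqref{equalities} are applied at the matching indices $n$ and $n+1$ so that the chain of implications connects the inductive hypothesis to its successor without gaps.
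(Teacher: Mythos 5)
Your proof is correct and follows essentially the same route as the paper's: both reformulate log-concavity as the monotonicity $Q_{\alpha}(n) \geq Q_{\alpha}(n+1)$, use the equivalences in (\ref{equalities}) to translate between the $R$- and $Q$-inequalities, and propagate the initial condition by induction. The only difference is that you spell out the inductive step explicitly where the paper leaves it to the reader.
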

\begin{proof}
A sequence $\{ \alpha(n)\}_n$ is log-concave at $n$ if
and only if
$Q_{\alpha}(n) \geq Q_{\alpha}(n+1)$. From the general property (\ref{equalities}), we observe that the initial condition is equal to $R_{\alpha}(n_0) > Q_{\alpha}(n_0)$. And applying again (\ref{equalities}) leads to $R_{\alpha}(n_0) > R_{\alpha}(n_0+1)$. The proof can be finished
applying mathematical induction. 
\end{proof}
If we consider $ 1 / \alpha(n)$, we obtain similar results for log-convex sequences. Wang and Zhu first observed that the monotonicity
of
quotients implies the monotonicity of the roots stated in (\cite{WZ14}, Theorem 
2.1). Note that Proposition \ref{prop: strategy}
implies the result
by Wang and Zhu.
\subsection{Basic
properties of
generalized D'Arcais
polynomials}
Let $g$ be a normalized arithmetic function with positive values.
We have introduced in the
introduction (see Definition (\ref{polynomials})),
associated with
$g$, the generalized D'Arcais polynomials $\{P_n^g(x)\}_{n\geq 0}$.
Then, the location of the largest real zero $x_n^g$ of
\begin{equation*}
\Delta_n^g(x)= \left(P_n^g(x)\right)^{n+1}-\left(P_{n+1}^g(x)\right)^n,
\end{equation*}
determines the positivity of $\Delta_n^g(x)$ since:
\begin{lemma}\label{lemma:largest}
Let the sequence $\{P_n^g(x)\}_n$ be given. Let $x_n^g$ be the largest real zero of $\Delta_n^g(x)$. Then $\Delta
_{n}^{g}\left( x
\right) >0$ for all $x > x_n^g$.
\end{lemma}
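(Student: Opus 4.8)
The plan is to treat $\Delta_n^g$ as an ordinary real polynomial and reduce the claim to two ingredients: that it tends to $+\infty$, and that it has no zeros to the right of $x_n^g$. The second is immediate from the very definition of $x_n^g$ as the \emph{largest} real zero, so the substantive part is the behaviour at $+\infty$, after which a soft sign argument finishes the proof.

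First I would pin down the top-degree behaviour of the building blocks. Using the recursion \eqref{polynomials} together with $P_0^g(x)=1$, an induction on $n$ shows that $\deg P_n^g = n$ with leading coefficient $g(1)^n/n!$: the highest power of $x$ at each step comes from the $k=1$ summand applied to $P_{n-1}^g$, and the prefactor $x/n$ raises the degree by one while dividing the top coefficient by $n$ (here $g(1)>0$ since $g$ takes positive values, so the degree is exactly $n$). Consequently both $\bigl(P_n^g\bigr)^{n+1}$ and $\bigl(P_{n+1}^g\bigr)^n$ have the \emph{same} degree $n(n+1)$, with leading coefficients $g(1)^{n(n+1)}/(n!)^{n+1}$ and $g(1)^{n(n+1)}/\bigl((n+1)!\bigr)^n$ respectively.

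The main step is then to check that the leading coefficient of $\Delta_n^g$ is strictly positive. After the common factor $g(1)^{n(n+1)}$ cancels, positivity amounts to $\bigl((n+1)!\bigr)^n > (n!)^{n+1}$, equivalently, after dividing by $(n!)^n$, to $(n+1)^n > n!$. This holds for every $n\geq 1$, since each of the $n$ factors equal to $n+1$ exceeds each of the factors $1,2,\dots,n$ of $n!$. Hence the top-degree coefficient of $\Delta_n^g$ is positive and $\Delta_n^g(x)\to+\infty$ as $x\to\infty$. I expect this comparison to be the only nonroutine point: the competing terms have \emph{equal} degree, so the sign of the limit is a genuine inequality between leading coefficients rather than a cheap consequence of one degree dominating the other.

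Finally I would close with continuity. Since $\Delta_n^g$ is a polynomial and, by definition of $x_n^g$, has no real zero in the open interval $(x_n^g,\infty)$, the intermediate value theorem forces $\Delta_n^g$ to keep a constant sign throughout $(x_n^g,\infty)$; because $\Delta_n^g(x)\to+\infty$, that sign must be positive, giving $\Delta_n^g(x)>0$ for all $x>x_n^g$. (Should $\Delta_n^g$ happen to possess no real zero at all, the same asymptotics show it is positive on all of $\mathbb{R}$, which is consistent with the statement.)
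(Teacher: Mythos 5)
Your proposal is correct and follows essentially the same route as the paper: both reduce the claim to the observation that $(P_n^g)^{n+1}$ and $(P_{n+1}^g)^n$ have equal degree $n(n+1)$, so positivity of the leading coefficient of $\Delta_n^g$ comes down to $\left(1/n!\right)^{n+1}>\left(1/(n+1)!\right)^n$, after which the sign is constant to the right of the largest real zero. You merely spell out a couple of steps the paper leaves implicit (the inequality $(n+1)^n>n!$ and the final intermediate-value argument), and you carry the factor $g(1)^n$ that the paper's normalization $g(1)=1$ makes trivial.
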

\begin{proof}
Let $n \geq 1$.
Note that real
zeros exist
since $P_n^g(0)=0$. The degree of $P_n^g(x)$ is equal to $n$ and is
determined by $P_n^g(x) = \frac{1}{n!}\sum_{k=1}^n A_{n,k}^g \, x^k$ with $A_{n,n}^g=1$.
Consequently, the leading coefficient of $\Delta_n^g(x)$ is given by:
\begin{equation*}
\left( \frac{1}{n!} \, A_{n,n}^{g} \right) ^{n+1}-\left( \frac{1}{\left( n+1\right) !}A_{n+1,n+1}^{g}\right) ^{n}=
\left(\frac{1}{n!}\right)^{n+1} - \left(\frac{1}{(n+1)!}\right)^n>0.
\end{equation*}
\end{proof}
In this paper, we provide evidence that
$x_n^g \leq g(2)$ for polynomials related to partition numbers, plane partition numbers, overpartitions including
Pochhammer polynomials and associated Laguerre polynomials. We provide proofs in some cases but are far away from the general case. We believe that new techniques have to be discovered.
Nevertheless, for $n=1$ and $n=2$, we are explicit and obtain some necessary conditions.
\begin{remark*}
We have that 
\begin{eqnarray*}
\Delta_1^g(x) &=& \frac{x}{2} \left( x
- g(2)\right) ,\\
\Delta _{2}^{g}\left( x\right) & = &\frac{x^{2}}{72}\left( 7\*x^4
 + 15\*g\left( 2\right) \*x^3
 + \left(9\*\left( g
 \left( 2
 \right) \right) ^2
 - 8\*g\left( 3\right) \right) \*x^2 \right.\\
&&\left. + \left(9\*\left( g
 \left( 2
 \right) \right) ^3
 - 24\*g
 \left( 3\right)
 \*g
 \left( 2\right)
 \right) \*x
 - 8\*\left( g
 \left( 3\right)
 \right) ^2\right). 
 \end{eqnarray*}
\end{remark*}

\begin{lemma}
We have that
$\Delta _{2}^{g}\left( x\right) \geq 0$
for all $x\geq g\left( 2\right) $ if and only if
$g\left( 3\right) \leq
\left( g\left( 2\right) \right) ^{2}$.
\end{lemma}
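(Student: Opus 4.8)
The plan is to work directly from the explicit expression for $\Delta_2^g(x)$ recorded in the Remark above. Write $a := g(2)$ and $b := g(3)$, both positive since $g$ is a normalized arithmetic function with positive values, and factor out the manifestly non-negative prefactor by setting
\[
Q(x) := 7x^4 + 15ax^3 + (9a^2 - 8b)x^2 + (9a^3 - 24ab)x - 8b^2,
\]
so that $\Delta_2^g(x) = \tfrac{x^2}{72}\,Q(x)$. Since $x^2/72 \geq 0$ and the range of interest satisfies $x \geq a > 0$, the statement ``$\Delta_2^g(x)\geq 0$ for all $x\geq a$'' is equivalent to ``$Q(x)\geq 0$ for all $x\geq a$.'' Everything then reduces to a one-variable analysis of $Q$ on the ray $[a,\infty)$.

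For the forward direction I would simply test the endpoint $x=a$. A short computation collapses the quartic to
\[
Q(a) = 40a^4 - 32a^2 b - 8b^2 = 8\,(5a^2 + b)(a^2 - b).
\]
As $5a^2 + b > 0$, the sign of $Q(a)$, hence of $\Delta_2^g(a)$, is governed exactly by $a^2 - b$. Thus if $\Delta_2^g(x)\geq 0$ holds throughout $[a,\infty)$, then in particular $Q(a)\geq 0$, which forces $b \leq a^2$. This half is immediate once the factorization is in hand.

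For the converse I assume $b \leq a^2$ and must upgrade non-negativity at the single point $x=a$ to non-negativity on all of $[a,\infty)$. The strategy is a cascade of derivatives showing that $Q$ is increasing on $[a,\infty)$. First, $Q'''(x) = 168x + 90a > 0$ for $x\geq 0$, so $Q''$ increases there; since $Q''(a) = 192a^2 - 16b \geq 176a^2 > 0$ under $b\leq a^2$, we get $Q''(x) > 0$ on $[a,\infty)$. Hence $Q'$ increases on $[a,\infty)$, and $Q'(a) = 20a(5a^2 - 2b) \geq 60a^3 > 0$ under the hypothesis, so $Q'(x) > 0$ on $[a,\infty)$. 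Therefore $Q$ itself is increasing on $[a,\infty)$, and combining this with $Q(a)\geq 0$ yields $Q(x)\geq Q(a)\geq 0$ throughout, as required.

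The main obstacle is precisely this converse step: passing from the value at $x=a$ to the entire ray, since a priori $Q$ could dip negative somewhere beyond $a$. The derivative cascade is what rules this out cheaply, and the one point demanding care is that the hypothesis $b\leq a^2$ is consumed exactly at the three endpoint evaluations $Q(a)$, $Q'(a)$, $Q''(a)$, each of which must be verified to be non-negative there. The clean structural input that organizes the whole argument is the factorization $Q(a) = 8(5a^2+b)(a^2-b)$, which singles out $a^2 - b$ as the decisive quantity and makes the equivalence transparent.
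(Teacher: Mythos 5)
Your proof is correct and is essentially the same as the paper's: the forward direction via the factorization $Q(g(2))=8\left(5g(2)^{2}+g(3)\right)\left(g(2)^{2}-g(3)\right)$ is identical, and your derivative cascade at $x=g(2)$ is just the paper's Taylor shift $Q\left(x+g(2)\right)$ in disguise, since the quantities $Q(g(2))$, $Q'(g(2))$, $Q''(g(2))$, $Q'''(g(2))$ you check are (up to factorials) exactly the coefficients the paper shows are non-negative. Both arguments verify the same computations, so there is nothing to add.
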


\begin{proof}
Let $\Delta _{2}^{g}\left( x\right) =\frac{x^{2}}{72}Q\left( x\right) $.
Then
$Q\left( x+g\left( 2\right) \right) =7\*x^4
 + 43\*g\left( 2\right) \*x^3
 + \left(96\*\left( g\left( 2\right) \right) ^2
 - 8\*g\left( 3\right) \right) \*x^2
 + \left(100\*\left( g\left( 2\right) \right) ^3
 - 40\*g\left( 3\right) \*g\left( 2\right) \right) \*x
 + 40\*\left( g\left( 2\right) \right) ^4
 - 32\*g\left( 3\right) \*\left( g\left( 2\right) \right) ^2
 - 8\*\left( g\left( 3\right) \right) ^2
 $.
Suppose
$g\left( 3\right)
\leq \left( g\left( 2\right) \right) ^{2}$.
Then
$96\left( g\left( 2\right) \right) ^{2}-8g\left( 3\right) \geq 88
\left( g\left( 2\right) \right) ^{2}$
and
$100
\left( g\left( 2\right) \right) ^{3}-
40g\left( 3\right) g\left( 2\right) \geq 60\left( g\left( 2\right) \right) ^{3}$.
As
\begin{equation}
40\left( g\left( 2\right) \right) ^{4}-32g\left( 3\right) \left( g\left( 2\right) \right) ^{2}-8\left( g\left( 3\right) \right) ^{2}
=8\left( \left( g\left( 2\right) \right) ^{2}
-g\left( 3\right) \right) 
\left( 5\left( g\left( 2\right) \right) ^{2}
+g\left( 3\right) \right) \label{eq:g2g3}
\end{equation}
we have
$$Q
\left( x\right) \geq
7x^{4}+43
g\left( 2\right) x^{3}+88
\left( g\left( 2\right) \right) ^{2}x^{2}+60
\left( g\left( 2\right) \right) ^{
3}x \text{ for }x\geq 0. $$
If $g\left( 3\right) >\left( g\left( 2\right) \right) ^{2}$ then
$Q\left( 0\right) <0$ by (\ref{eq:g2g3}).
\end{proof}


The arithmetic functions 
$\sigma_{\ell}(n):=\sum_{d \mid
n}n^{\ell}$
and especially the case $\ell=1$ leads to Sun's conjecture. 
In this case, we recover the D'Arcais polynomials \cite{DA13,HN20}, which are related to the $q$-expansion of powers of the Dedekind function. 
These polynomials are also known as the Nekrasov--Okounkov polynomials \cite{NO06,Ha10} in statistical mechanics and combinatorics.
Note that $P_n^{\sigma _{1}}\left( 1\right)
=p(n)$ and $P_n^{\sigma _{2}}\left( 1
\right) ={pp}(n)$, where ${pp}(n)$ is the number of plane partitions of $n$. We refer to
\cite{An98}. Several other interesting cases indicate
a general pattern of $\Delta_n^g(x)$ (\ref{delta}) regarding the location of the largest real zero of
polynomials defined by (\ref{polynomials}).
The number of $\ell$-tupels of permutations in $S_n$, which generate abelian subgroups, is related to 
$g_{\ell}(n):= \sum_{d \mid
n} dg_{\ell-1}(d)$. Here
$g_0(n)=1$ for $n=1$ and $0$ otherwise \cite{HH90,BF98,Ab24,ABDV24}. 
Moreover, let 
$\bar{g}(n):= \sigma_1(n)- \sigma_1(n/2)$ 
with $\sigma(x)=0$ for $x \not\in \mathbb{N}$. Then $P_n^{\bar{g}}(1)=\bar{p}(n)$ \cite{CL04} is the number of overpartitions of $n$.

\section{Partitions and D'Arcais
polynomials} \label{Section:Arcais}
Let $p(n)$ denote the number of partitions of $n$. A partition $\lambda$ of a positive integer $n$ is any non-increasing sequence $(\lambda_1, \lambda_2, \ldots, \lambda_d)$ of positive integers whose sum $\vert \lambda\vert:=\sum_{k=1}^{d}\lambda _{k}$ is equal to $n$. We refer to
the works by Andrews
and Ono
\cite{An98, On04}. Euler's generating function
\begin{equation*}
\sum_{n=0}^{\infty} p(n) \, q^n = \prod_{n=1}^{\infty} \frac{1}{1 -q^n} = 1+q+ 2q^2+ 3q^3 + 5q^4+ \cdots
\end{equation*}
represents a holomorphic function in the circle of
unity (\cite{HW60}, \cite{Ap76}(Section 14.3), \cite{Re98} (Chapter $1$)).
By Cauchy--Hadamard's theorem,
the sequence of 
partition numbers $\{p(n)\}$ (\cite{AE04}, Section 6.4)
exhibits a
subexponential function,
satisfying
\begin{equation}\label{lim}
\lim_{n \to \infty} \sqrt[n]{p(n)} = 1.
\end{equation}
Andrews demonstrated
that this property can also
be derived through
combinatorial methods
\cite{An71,AE04}.
The first values of $p(n)$ (Table \ref{first values}) show some irregularity. 
\begin{table}[H]
\[
\begin{array}{c|ccccccc}
\hline
n&1&2&3&4&5&6&7\\ \hline \hline
\sqrt[n]{p\left( n\right)}
&1.00&1.41&1.44&1.50&1.48&1.49 &1.47 \\ \hline
\end{array}
\]
\caption{\label{first values}
Approximative values of $\sqrt[n]{p\left( n\right)}$}
\end{table}
Sun \cite{Su13} observed and conjectured that the 
sequence $\sqrt[n]{p(n)}$ satisfies the following property
which was proven by Wang and Zhu \cite{WZ14}; shortly thereafter,
Chen and Zheng \cite{CZ17} obtained an independent proof.
\begin{theorem}[Wang and Zhu \cite{WZ14}]\ \\
\label{Andrew}The sequence $\{\sqrt[n]{p(n)}\}_n$ is strictly decreasing for $n \geq 6$. 
\end{theorem}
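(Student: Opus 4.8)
The plan is to apply Proposition~\ref{prop: strategy} directly to the sequence $\alpha(n)=p(n)$, so that the theorem reduces to two ingredients: unconditional log-concavity of $p(n)$ from some threshold $n_0$ onward, the single initial comparison $R_\alpha(n_0-1)>R_\alpha(n_0)$, and a finite computation that bridges the gap down to $n=6$. Recall that Proposition~\ref{prop: strategy} converts decreasing monotonicity of the roots $\sqrt[n]{p(n)}$ into monotonicity of the quotients $Q_\alpha(n)=p(n)/p(n-1)$ through the equivalences (\ref{equalities}), but this conversion is only licensed once log-concavity holds \emph{with no exceptions} beyond $n_0$.

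First I would invoke the log-concavity theorem of DeSalvo and Pak \cite{DP15}: the sequence $\{p(n)\}$ is strictly log-concave for all $n\geq 26$, i.e. $p(n)^2>p(n-1)\,p(n+1)$ for $n\geq 26$. This is exactly the hypothesis of Proposition~\ref{prop: strategy} with $n_0=26$. One cannot take a smaller $n_0$, since for small and moderate $n$ the log-concavity defect oscillates in sign (it fails at $n=3,5,7,\dots,25$); this is precisely why Sun's inequality begins only at $n=6$ and why a uniform induction cannot be launched near $n=6$. Next I would verify the initial condition $R_\alpha(25)>R_\alpha(26)$, that is $\sqrt[25]{p(25)}>\sqrt[26]{p(26)}$, from the values $p(25)=1958$ and $p(26)=2436$; this is a single numerical inequality. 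Proposition~\ref{prop: strategy} then yields at once that $\{\sqrt[n]{p(n)}\}_{n\geq 25}$ is strictly decreasing.

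It then remains to cover the finite range $6\leq n\leq 24$: I would check directly from the explicit values of $p(n)$ that $\sqrt[n]{p(n)}>\sqrt[n+1]{p(n+1)}$ for each such $n$. Concatenating this finite chain of strict inequalities with the infinite decreasing tail from the previous step gives that $\{\sqrt[n]{p(n)}\}_{n\geq 6}$ is strictly decreasing, which is the assertion; compatibility with (\ref{lim}) is automatic since the decreasing sequence stays above its limit $1$.

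The main obstacle is not the combinatorial bookkeeping but the log-concavity input itself. Proposition~\ref{prop: strategy} packages the elementary induction, and the finite verification on $6\leq n\leq 24$ is routine; but establishing that $p(n)$ is \emph{eventually} log-concave with the explicit uniform threshold $n=26$ is the genuinely hard analytic step, relying on the Hardy--Ramanujan--Rademacher expansion and delicate error estimates as in \cite{DP15}. Everything past that threshold is then a finite computation plus the monotonicity transfer already isolated in Proposition~\ref{prop: strategy}.
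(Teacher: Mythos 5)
Your proposal is correct and follows essentially the same route as the paper: log-concavity of $p(n)$ for $n\geq 26$ (the paper credits Nicolas \cite{Ni78}, with DeSalvo--Pak \cite{DP15} as a later reference), Proposition~\ref{prop: strategy} with $n_0=26$, and a direct check of the finite range $6\leq n\leq 24$. You are in fact slightly more explicit than the paper in spelling out the required initial comparison $\sqrt[25]{p(25)}>\sqrt[26]{p(26)}$, which the paper leaves implicit in its appeal to Proposition~\ref{prop: strategy}.
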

\begin{proof} We consider the sequence $\sqrt[n]{p(n)}_{n\geq 0}$.
Nicolas \cite{Ni78} (see also DeSalvo and Pak \cite{DP15}) proved the log-concavity of $\{p(n)\}$ for $n \geq 26$. Therefore, we can apply
Proposition \ref{prop: strategy} for $n_0=26$. The remaining cases
$ 6 \leq n \leq 24$ can be checked directly by a computer.
\end{proof}
This proof method we
call the {\it log-concavity method with initial condition\/}, discovered by Wang and Zhu. 
\begin{remark*} 
The only known proof
of the log-concavity of the sequence
$\{p(n)\}_n$ for $n \geq 26$ relies
on the celebrated circle method
by Hardy and Ramanujan \cite{HR17}, utilizing Rademacher's explicit formula \cite{Ra43}. 
\end{remark*}

The strictly decreasing property also reads as
\begin{equation} \label{p:new}
p(n)^{n+1} > p(n+1)^n,
\end{equation}
which may have some combinatorial interpretation.
Encouraged by Andrews' combinatorial proof (\cite{AE04}, Section 6.4) of (\ref{lim}), we believe it is worth examining
whether inequality (\ref{p:new}) also admits a combinatorial proof.
Notably,
combinatorial proofs had been provided
for
the Bessenrodt--Ono inequality \cite{BO16} by Alanazi, Gagola,
and Munagi \cite{AGM17} as well as
by Chern, Fu, and Tang \cite{CFT18}. 
%
%
If one considers the partition numbers $p(n)$ as a special value of
the $n$th D'Arcais polynomial $P_n^{\sigma}(x)$, where $p(n)=P_n^{\sigma}\left( 1\right) $,
it
is natural to invest in the distribution of the zeros  of $\Delta_n^{\sigma}\left( x\right)
$. Since $\lim_{x \to \infty} \Delta_n^{\sigma}(x)= \infty$,
it is obvious that $\Delta_n^{\sigma}(x) >0$ for $x >
x_n^{\sigma}$, where $x_n^{\sigma}$ is the largest real zero of $P_n^{\sigma}
\left( x\right)
$. We refer to Section \ref{abschn1.1}
for a plot of the largest real zero for $1 \leq n \leq 25$.
More generally, we have with the support of Steven Charlton:
\begin{theorem} \label{color} Let $6 \leq n \leq 150$. Then $x_n^{\sigma} <1$.
\end{theorem}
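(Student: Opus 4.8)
The plan is to reduce Theorem~\ref{color} to a finite list of polynomial positivity checks and then dispatch them by exact computation. Recall that $\Delta_n^\sigma(0)=0$, so $\Delta_n^\sigma$ always has a real zero and $x_n^\sigma\ge 0$ is well defined. By Lemma~\ref{lemma:largest}, $\Delta_n^\sigma(x)>0$ for all $x>x_n^\sigma$. Hence, for each fixed $n$, the assertion $x_n^\sigma<1$ is \emph{equivalent} to
\[
\Delta_n^\sigma(x)>0 \quad\text{for all } x\ge 1 .
\]
Indeed, if $x_n^\sigma<1$ then positivity on $[1,\infty)$ is immediate from the lemma, while conversely positivity on $[1,\infty)$ forbids any zero $\ge 1$ and thus forces the largest real zero (which exists, since $x=0$ is always one) below $1$. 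So it suffices, for each integer $n$ with $6\le n\le 150$, to certify that the explicit polynomial $\Delta_n^\sigma$ is strictly positive on the half-line $[1,\infty)$.

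First I would assemble the polynomials exactly. Running the recursion (\ref{polynomials}) with $g=\sigma$ produces $P_n^\sigma$ and $P_{n+1}^\sigma$ as polynomials with rational (indeed, after clearing $n!$, integer) coefficients; since $\sigma$ takes positive values, all $A_{n,k}^\sigma\ge 0$, so $P_n^\sigma(x)>0$ for $x>0$. Forming $\Delta_n^\sigma=(P_n^\sigma)^{n+1}-(P_{n+1}^\sigma)^n$ gives a polynomial of degree $n(n+1)$ with exactly known rational coefficients and, by Lemma~\ref{lemma:largest}, positive leading coefficient; moreover $\Delta_n^\sigma$ carries a trivial factor $x^n$, which may be divided off (it is positive on $[1,\infty)$) to lower the working degree.

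To certify positivity on $[1,\infty)$ rigorously, without any floating point, I would proceed in two steps. The value at the left endpoint is handled by the already-established Sun conjecture: $\Delta_n^\sigma(1)=p(n)^{n+1}-p(n+1)^n>0$ for $n\ge 6$ by Theorem~\ref{Andrew}. It then remains to exclude zeros of $\Delta_n^\sigma$ in $(1,\infty)$. I would compute a Cauchy-type upper bound $M$ on the moduli of all complex roots directly from the coefficients, and then apply Sturm's theorem to count the real roots of $\Delta_n^\sigma$ in the closed interval $[1,M]$; confirming this count is $0$, together with $\Delta_n^\sigma(1)>0$ and $\Delta_n^\sigma(M)>0$, yields $\Delta_n^\sigma>0$ throughout $[1,M]$, hence on all of $[1,\infty)$. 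Since Sturm sequences and the evaluations $\Delta_n^\sigma(1)$, $\Delta_n^\sigma(M)$ can be carried out in exact rational arithmetic, each of the $145$ cases is a finite, certifiable computation. A purely algebraic alternative would be to seek a nonnegativity certificate for $\Delta_n^\sigma(1+t)$ as a polynomial in $t\ge 0$ (for instance an SOS or P\'olya-type expansion), but I expect sign changes among the shifted coefficients, so Sturm's exact root isolation is the safer route.

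The main obstacle is the sheer computational scale rather than any conceptual difficulty. The degree $n(n+1)$ reaches $22650$ at $n=150$, and expanding $(P_n^\sigma)^{n+1}$ produces integer coefficients of enormous bit-length; running Sturm chains on such polynomials in exact arithmetic is expensive in both time and memory, which is precisely why the computational support of Steven Charlton is invoked and why the verified range is capped at $n\le 150$. Care must also be taken that the root bound $M$ and all intermediate quantities are handled symbolically, or with certified interval arithmetic, so that the final inequality $x_n^\sigma<1$ is established with full rigor rather than merely observed numerically.
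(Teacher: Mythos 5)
Your proposal is correct and matches the paper's approach: the paper establishes this theorem purely by explicit computation of the largest real zeros $x_n^{\sigma}$ for $6\le n\le 150$ (credited to Steven Charlton), with no further argument given. Your reduction via Lemma~\ref{lemma:largest} to certifying $\Delta_n^{\sigma}(x)>0$ on $[1,\infty)$, anchored at $x=1$ by Theorem~\ref{Andrew} and completed by a root bound plus Sturm's theorem in exact arithmetic, is a sound and in fact more explicitly rigorous account of that same finite verification.
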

Further calculations lead to the following observation.
\begin{conjecture} 
\label{conj:sigma} Let $n \geq 6$ and $x \geq 1$. Then
\begin{equation*}
(P_n^{\sigma}(x))^{n+1} - (P_{n+1}^{\sigma}(x))^n >0.
\end{equation*}
\end{conjecture}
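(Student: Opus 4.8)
The plan is to lift the \emph{log-concavity method with initial condition} of Wang and Zhu (the engine behind Theorem~\ref{Andrew}, the case $x=1$) from the single sequence $\{p(n)\}$ to the entire family $\{P_n^{\sigma}(x)\}_n$, now viewed for each fixed parameter $x\geq1$. First I would recast Conjecture~\ref{conj:sigma} as a statement about the largest real zeros: by Lemma~\ref{lemma:largest} one has $\Delta_n^{\sigma}(x)>0$ for every $x>x_n^{\sigma}$, so the conjecture is \emph{equivalent} to the assertion $x_n^{\sigma}<1$ for all $n\geq6$. Fixing $x\geq1$ and writing $\alpha_x(n):=P_n^{\sigma}(x)$, this is in turn equivalent to the strict decrease of $R_{\alpha_x}(n)=\sqrt[n]{\alpha_x(n)}$ for $n\geq6$, since $R_{\alpha_x}(n)>R_{\alpha_x}(n+1)$ unwinds to $\alpha_x(n)^{n+1}>\alpha_x(n+1)^{n}$, i.e. to $\Delta_n^{\sigma}(x)>0$. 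Positivity of $\alpha_x(n)$ for $x\geq1$ is immediate from the hook-length product in Section~\ref{abschn1.1}, since each factor $1+(x-1)/h^2\geq1$.

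Next I would feed this into Proposition~\ref{prop: strategy}, which for fixed $x$ requires two inputs: log-concavity of $\{\alpha_x(n)\}$ for $n\geq n_0$, and the initial condition $R_{\alpha_x}(n_0-1)>R_{\alpha_x}(n_0)$. The decisive simplification is that Theorem~\ref{color} already delivers $x_n^{\sigma}<1$ for $6\leq n\leq150$ \emph{uniformly in $x\geq1$}; this supplies both the finite initial range and, at $n_0=151$, the required initial condition $R_{\alpha_x}(150)>R_{\alpha_x}(151)$. Hence, granting the log-concavity input, Proposition~\ref{prop: strategy} would yield strict decrease of $\{\sqrt[n]{\alpha_x(n)}\}$ for $n\geq150$, and together with Theorem~\ref{color} on $6\leq n\leq150$ this gives $x_n^{\sigma}<1$ for all $n\geq6$, as desired.

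The heart of the matter is therefore the Tur\'{a}n/log-concavity inequality
\[
\left(P_n^{\sigma}(x)\right)^2 > P_{n-1}^{\sigma}(x)\,P_{n+1}^{\sigma}(x),
\]
to be proved for all $x\geq1$ and all $n\geq151$. Since the $P_n^{\sigma}(x)$ are the Taylor coefficients of $\prod_{m\geq1}(1-t^m)^{-x}$, a Meinardus--Wright asymptotic expansion provides
\[
\log P_n^{\sigma}(x) = \pi\sqrt{\frac{2xn}{3}} + c_1(x)\log n + c_0(x) + o(1).
\]
The leading term is strictly concave in $n$ and contributes to $2\log P_n^{\sigma}(x)-\log P_{n-1}^{\sigma}(x)-\log P_{n+1}^{\sigma}(x)$ a positive quantity of order $\tfrac{\pi}{4}\sqrt{2x/3}\,n^{-3/2}$, which outgrows the $\log n$ and constant contributions; for $n$ large this forces log-concavity.

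The hard part will be making this uniform in $x\geq1$ with an explicit threshold no larger than $151$. Unlike the classical case $x=1$, where Nicolas and Rademacher's exact formula pin down log-concavity from $n\geq26$, there is no comparably sharp exact formula for general $x$, so one must bound the error terms in the asymptotic expansion effectively and \emph{uniformly over the whole half-line} $x\geq1$. I expect it to be cleanest to split into two regimes: for $x$ in a bounded interval the analysis is a controlled perturbation of the partition case, whereas as $x\to\infty$ the polynomial is dominated by its leading monomial $x^n/n!$ (recall $A_{n,n}^{\sigma}=1$), so that $\left(P_n^{\sigma}(x)\right)^2/\bigl(P_{n-1}^{\sigma}(x)P_{n+1}^{\sigma}(x)\bigr)\to (n+1)/n>1$ and log-concavity is transparent. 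Reconciling the two regimes, and certifying that the provable log-concavity threshold does not exceed the range already covered by Theorem~\ref{color}, is the decisive technical obstacle; should that threshold $N_0$ exceed $151$, one would additionally need the easier first-order bounds $R_{\alpha_x}(n-1)>R_{\alpha_x}(n)$ for $151\leq n\leq N_0$, which follow directly from the concavity of the leading term $\pi\sqrt{2xn/3}$.
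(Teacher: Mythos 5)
The statement you set out to prove is labelled a \emph{Conjecture} in the paper, and the authors do not prove it: what they establish is the computational range $6\leq n\leq 150$ (Theorem \ref{color}) and the integer case $x=k\in\mathbb{N}$ (Theorem \ref{th:color}), the latter by exactly the reduction you describe --- Lemma \ref{lemma:largest}, Proposition \ref{prop: strategy}, the log-concavity of $p_k(n)$ from \cite{BKRT21}, and an initial condition ($x_5^{\sigma}<2$). The purely formal parts of your plan are correct and faithful to the paper's method: the equivalence of the conjecture with $x_n^{\sigma}<1$ for all $n\geq 6$, the positivity of $P_n^{\sigma}(x)$ for $x\geq 1$, and the observation that Theorem \ref{color} supplies both the range $6\leq n\leq 150$ and the initial condition $R_{\alpha_x}(150)>R_{\alpha_x}(151)$ needed to launch Proposition \ref{prop: strategy} at $n_0=151$.

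The genuine gap is the one input you defer: strict log-concavity of $n\mapsto P_n^{\sigma}(x)$ for all real $x\geq 1$, with an \emph{effective, $x$-uniform} threshold not exceeding $151$. This is not a technicality to be reconciled later; it is the entire open content of the conjecture. The Meinardus-type expansion $\log P_n^{\sigma}(x)=\pi\sqrt{2xn/3}+c_1(x)\log n+c_0(x)+o(1)$ gives the right heuristic for each fixed $x$, but the error term is not uniform over the half-line $x\geq 1$, and your large-$x$ observation ($P_n^{\sigma}(x)\sim x^n/n!$, so the Tur\'an ratio tends to $(n+1)/n$) is a fixed-$n$ limit that does not glue to the fixed-$x$ regime: nothing controls the region where $n$ and $x$ grow together. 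Note also that Proposition \ref{prop: strategy} requires the initial condition to be checked \emph{at the log-concavity threshold}; if that threshold depends on $x$ or cannot be certified to lie below $151$, the computed range of Theorem \ref{color} does not reach it --- this is precisely the obstruction the authors record in Remark (iii) after Theorem \ref{th:color} for non-integral $x$. As written, your proposal is a clean reduction of Conjecture \ref{conj:sigma} to an open effective log-concavity statement, not a proof; it yields nothing beyond what Theorems \ref{Andrew}, \ref{color} and \ref{th:color} already give.
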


Let $k \in \mathbb{N}$. We denote by
$p_k(n)$ the number of $k$-colored partitions of $n$ \cite{CFT18, BKRT21, BKPR25}. A $k$-colored partition of
$n$ is a partition of $n$ where to
each part one of $k$ colors is allocated.
Note that $p_1(n)=p(n)$. The D'Arcais polynomials evaluated at $k$ generalize
Euler's original approach towards the number of partitions $p(n)$.
We have $P_n^{\sigma}(k)=p_k(n)$. Therefore, Theorem \ref{color} implies that
$\{\sqrt[n]{p_k(n)}\}_{n}$ is strictly decreasing for $6 \leq n \leq 150$ for all $k$-colored partitions. We prove
Conjecture \ref{conj:sigma} for $k$-colored partitions. We utilize a deep
result by
Bringmann, Kane, Rolen,
and Tripp \cite{BKRT21} confirming the
Chern--Fu--Tang conjecture \cite{CFT18}. We refer also to (\cite{HN21},
(1.2)).
\begin{theorem} 
\label{th:color}Let $n,k \in \mathbb{N}$ and $n \geq 5
$.
Then $\Delta_n^{\sigma}(k) >0$. Therefore, for all
$k$-colored partitions and all $n \geq 5
$:
\begin{equation} \label{discrete:result}
p_k(n)^{n+1} > p_{k}(n+1)^n.
\end{equation}
\end{theorem}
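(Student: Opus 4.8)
The plan is to recast $\Delta_n^\sigma(k)>0$ as a monotonicity statement and then apply the log-concavity method of Proposition~\ref{prop: strategy}. Fix $k\in\mathbb{N}$; the case $k=1$ is Theorem~\ref{Andrew} (where the decrease begins only at $n=6$), so I would assume $k\ge2$ and write $\alpha=p_k$, with $p_k(n)=P_n^\sigma(k)$. Since $p_k(n)^{n+1}>p_k(n+1)^n$ is equivalent to $R_{p_k}(n)>R_{p_k}(n+1)$, the assertion $\Delta_n^\sigma(k)>0$ for all $n\ge5$ is exactly the strict decrease of $\{\sqrt[n]{p_k(n)}\}_{n\ge5}$, with $n=5$ as the initial step; the passage between roots, quotients and the log-concavity hypothesis is governed by \eqref{equalities}.

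For the tail I would invoke the resolution of the Chern--Fu--Tang conjecture by Bringmann, Kane, Rolen and Tripp~\cite{BKRT21,CFT18}, which provides, for each $k\ge2$, the strict log-concavity of $\{p_k(n)\}_n$ for all $n$ beyond an effective threshold $n_0(k)$. Feeding this together with the single initial inequality $R_{p_k}(n_0(k)-1)>R_{p_k}(n_0(k))$ into Proposition~\ref{prop: strategy} yields the strict decrease of $\sqrt[n]{p_k(n)}$ for all $n\ge n_0(k)-1$. What then remains is to establish $R_{p_k}(n)>R_{p_k}(n+1)$ directly on the low-order range $5\le n\le n_0(k)-1$.

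To treat this range I would use that $p_k(n)=P_n^\sigma(k)$ is, by Lemma~\ref{lemma:largest}, a polynomial in $k$ of degree $n$ with leading coefficient $1/n!$, so that for fixed $n$ one has $\sqrt[n]{p_k(n)}=k\,(n!)^{-1/n}\bigl(1+o(1)\bigr)$ as $k\to\infty$; since $(n!)^{-1/n}$ strictly decreases in $n$, this already gives $R_{p_k}(n)>R_{p_k}(n+1)$ whenever $k$ is large relative to $n$. The base case $n=5$ reduces to verifying $\Delta_5^\sigma(x)>0$ for all real $x\ge2$, a single polynomial inequality obtainable from the explicit $P_5^\sigma$ and $P_6^\sigma$; since $\Delta_5^\sigma(1)<0$, the threshold $x=2$ cannot be lowered, which is exactly why $k=1$ is excluded here. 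The remaining finitely many pairs $(n,k)$ are then checked by computation, the inequality of~\cite{HN21} serving as the convenient tool.

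The hard part is the uniformity in $k$. By the Meinardus asymptotic $p_k(n)\sim C_k\,n^{-(k+3)/4}\exp\!\bigl(\pi\sqrt{2kn/3}\,\bigr)$ the second difference of $\log p_k(n)$ changes sign near $n\asymp k$, so the log-concavity threshold $n_0(k)$ genuinely grows (linearly) with $k$; hence the low-order range $5\le n\le n_0(k)-1$ is not finite uniformly in $k$, and the leading-order expansion above controls only the regime $k\gtrsim n^2$. Closing the intermediate window $n\lesssim k\lesssim n^2$, where the log-concavity of $p_k$ fails yet the root inequality must persist, is where the effective strength of~\cite{BKRT21} (and the auxiliary estimate of~\cite{HN21}) is indispensable; this, rather than the reduction or the base case, is the genuine obstacle.
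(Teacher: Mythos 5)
Your overall skeleton is the paper's: reduce $\Delta_n^\sigma(k)>0$ to the strict decrease of $\{\sqrt[n]{p_k(n)}\}_n$, feed the log-concavity from \cite{BKRT21} into Proposition~\ref{prop: strategy}, and supply the initial condition at $n=5$ via $x_5^\sigma<2$ (equivalently $\Delta_5^\sigma(x)>0$ for $x\ge2$, hence $R_{p_k}(5)>R_{p_k}(6)$ for every integer $k\ge2$). But your proposal does not close, and the reason it does not close is a misreading of what \cite{BKRT21} delivers. You assume the log-concavity of $\{p_k(n)\}_n$ only holds beyond an effective threshold $n_0(k)$ that grows linearly in $k$, and you then (correctly, given that premise) identify an unresolved ``intermediate window'' $n\lesssim k\lesssim n^2$ as the genuine obstacle. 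That window does not exist: the resolution of the Chern--Fu--Tang conjecture gives $p_2(n)$ log-concave for all $n\ge 6$ and $p_k(n)$ log-concave for \emph{all} $n\ge1$ when $k\ge3$ --- a threshold that is uniform and tiny, not growing in $k$. Your Meinardus-based heuristic that the second difference of $\log p_k(n)$ changes sign near $n\asymp k$ is not correct; the exponential main term $\pi\sqrt{2kn/3}$ is concave in $n$ throughout, and the known failures of log-concavity occur only at a few small $n$ (none for $k\ge3$). With the uniform statement in hand, the paper's proof is one step: for $k\ge2$ apply Proposition~\ref{prop: strategy} with $n_0=6$, the hypothesis $R_{p_k}(5)>R_{p_k}(6)$ being exactly $x_5^\sigma<2$; the case $k=1$ is deferred to Theorem~\ref{Andrew}. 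Your large-$k$ asymptotic $\sqrt[n]{p_k(n)}\sim k\,(n!)^{-1/n}$ and the proposed finite computer check become unnecessary.

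One point you raise is worth keeping, though: you observe that for $k=1$ the decrease begins only at $n=6$, and indeed $\Delta_5^\sigma(1)=p(5)^6-p(6)^5=7^6-11^5<0$. So the threshold $n\ge5$ in the statement cannot include $k=1$ (the paper's own proof only covers $n\ge6$ there via Theorem~\ref{Andrew}); the clean statement is $n\ge5$ for $k\ge2$ and $n\ge6$ for $k=1$. Your diagnosis that the $x\ge2$ threshold at $n=5$ cannot be lowered to $x\ge1$ is exactly right and is the honest reason for that case split.
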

\begin{proof}
The $1$-colored
partitions are considered in Theorem \ref{Andrew}. 
Therefore, let $k \geq 2$. Then, \cite{BKRT21}
implies that
$p_2(n)$ is log-concave for $n \geq 6$ and 
$p_k(n)$ is log-concave for $k\geq 3$ and all $n$. 
Since $x_{5
}^{\sigma}<2
$, we can apply Proposition
\ref{prop: strategy} and conclude
the proof.
\end{proof}
\begin{corollary}
Let $k \geq 2$. Then, the strict inequality (\ref{discrete:result})
holds true for all natural numbers $n$
except for $k=2$ and $n=1$ holds the opposite inequality
and for $k=3$ and $n=1$ holds equality.
\end{corollary}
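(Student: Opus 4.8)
The plan is to lean on Theorem~\ref{th:color}, which already secures the strict inequality~(\ref{discrete:result}) for every $n \geq 5$ and every $k$, so that the whole content of the corollary reduces to a finite analysis of the exceptional range $n \in \{1,2,3,4\}$. First I would read off the two lowest D'Arcais polynomials from the recursion~(\ref{polynomials}): since $\sigma(2)=3$ one gets $p_k(1)=P_1^{\sigma}(k)=k$ and $p_k(2)=P_2^{\sigma}(k)=\tfrac{k(k+3)}{2}$. The case $n=1$ is then settled by the identity
\begin{equation*}
p_k(1)^2 - p_k(2) = k^2 - \frac{k(k+3)}{2} = \frac{k(k-3)}{2},
\end{equation*}
whose sign is negative for $k=2$, zero for $k=3$, and strictly positive for $k \geq 4$. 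This produces exactly the two stated exceptions and simultaneously gives the strict inequality at $n=1$ for all $k \geq 4$.

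For the range $n \in \{2,3,4\}$ I would split according to the available log-concavity. When $k \geq 3$, the result of Bringmann--Kane--Rolen--Tripp~\cite{BKRT21} invoked in the proof of Theorem~\ref{th:color} guarantees that $\{p_k(n)\}_n$ is log-concave for all $n$, so Proposition~\ref{prop: strategy} applies once a single initial condition is verified. For $k \geq 4$ the computation above already yields $R_{p_k}(1) > R_{p_k}(2)$, so Proposition~\ref{prop: strategy} with $n_0=2$ makes $\{\sqrt[n]{p_k(n)}\}_{n\geq 1}$ strictly decreasing and covers all $n \geq 1$ at once. For $k=3$ the initial condition at $n_0=2$ degenerates to the equality found above, so I would instead verify it at $n_0=3$: using $p_3(2)=9$ and $p_3(3)=P_3^{\sigma}(3)=22$ one checks $p_3(2)^3 = 729 > 484 = p_3(3)^2$, and Proposition~\ref{prop: strategy} with $n_0=3$ then yields strict monotonicity for all $n \geq 2$, matching the assertion that $n=1$ is the only failure when $k=3$.

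The remaining and genuinely delicate case is $k=2$, precisely because $\{p_2(n)\}_n$ is only \emph{eventually} log-concave (from $n \geq 6$), so the monotonicity cannot be propagated downward through Proposition~\ref{prop: strategy} into the small-$n$ regime. Here I would evaluate the inequality directly from $p_2(2)=5$, $p_2(3)=10$, $p_2(4)=20$, $p_2(5)=36$, checking $5^3 > 10^2$, $10^4 > 20^3$ and $20^5 > 36^4$ for $n=2,3,4$; together with Theorem~\ref{th:color} for $n \geq 5$ and the opposite inequality at $n=1$, this closes the case. I expect this $k=2$ bookkeeping to be the main obstacle, not because any single step is hard but because it is the one value of $k$ for which no structural monotonicity statement is available below $n=6$, forcing the four smallest cases to be isolated by hand. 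As a more uniform alternative one could instead show that the largest real zero satisfies $x_n^{\sigma} < 2$ for $n=2,3,4$, which would give $\Delta_n^{\sigma}(k)>0$ for all integers $k \geq 2$ simultaneously; for $n=2$ this is immediate from the Remark's explicit formula, where $\Delta_2^{\sigma}(x)=\tfrac{x^2}{72}\bigl(7x^4+45x^3+49x^2-45x-128\bigr)$ has its largest real root in $(1,2)$, at the cost of locating the largest roots of the degree-$12$ and degree-$20$ polynomials $\Delta_3^{\sigma}$ and $\Delta_4^{\sigma}$.
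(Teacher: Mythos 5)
Your proposal is correct and follows essentially the same route as the paper: handle $n\geq 5$ via Theorem~\ref{th:color}, settle $n=1$ through the sign of $p_k(1)^2-p_k(2)=k(k-3)/2$ (equivalently $x_1^{\sigma}=3$), propagate via log-concavity and Proposition~\ref{prop: strategy} for $k\geq 3$ (for $k=3$ starting the induction one step later, which is the paper's appeal to $x_2^{\sigma}<2$), and check $k=2$, $1\leq n\leq 4$ by hand. Your write-up merely makes explicit the numerical verifications that the paper's one-line proof leaves to the reader.
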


\begin{proof}
For $k=2
$ we can check the cases $1\leq n
\leq 4
$
directly, also for $k=3$ the case $n=1$ and then using
that $x_{2}^{\sigma }<2$,
and for $k\geq 4$, we use $x_{1}^{\sigma }=3
$.
\end{proof}

\begin{remarks*}\
\begin{itemize}
\item[(i)] Let $k \geq 2$. Then, the sequence $
\left\{ \sqrt[n]{p_{k}\left( n\right) }\right\}
_{n\geq 1}$ is strictly decreasing
except in the cases $k\in \left\{ 2,3\right\} $ only
for $n\geq 2$. 
\item[(ii)] Note, it is not necessary to have strict log-concavity
\cite{BKPR25}. Moreover, the inequality 
$p_2(5)^2 < p_2(4) \, p_2(6)$ has no negative effect. 
\item[(iii)] The general result of \cite{BKRT21} on the log-concavity of $P_n^{\sigma}(x)$ is not sufficient to prove a result for $x \geq 2$ and $x \not\in \mathbb{N}$, since one also needs the initial condition related to
Proposition \ref{prop: strategy} following the proof method of \cite{WZ14}.
\item[(iv)] A direct proof of the claim $x_n^{\sigma}<2
$ for
$2\leq n \leq 5
$ would give a new proof of Theorem \ref{th:color} and the Conjecture.
\end{itemize}
\end{remarks*}

\section{The
cases $\psi_{\ell}(n)$: Pochhammer und Laguerre polynomials}
\label{sect:poch}
For $\ell=0$, the most explicit case, we have a closed formula for
the polynomials $P_n^{\psi_0}\left( x\right)
$ given by (\ref{Pochhammer}). 
These are the Pochhammer polynomials.
This makes it possible to determine $x_n^{\psi_0}$. Associated Laguerre polynomials are obtained for $\ell=1$. To determine the location of
$x_n^{\psi_1}$ is already a non-trivial task. It is an open question how to
attack the cases $\ell>1$.
\subsection{Largest real zero of $\Delta_n^{\psi_0}(x)$}
\label{ell=0}
\begin{proposition}
Let $g\left( n\right) =\psi _{0}\left( n\right) =1$ for all $n$. Then the
largest zero of
\begin{equation*}
\Delta_n^{\psi_0}(x)\, = \,
\left( P_{n}^{\psi _{0}}\left( x\right) \right) ^{n+1}-\left( P_{n+1}^{\psi _{0}}\left( x\right) \right) ^{n}
\end{equation*}
is $x_n^{\psi_{0}}=1$ for all $n\geq 1$.
\end{proposition}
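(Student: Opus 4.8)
The plan is to exploit the explicit product formula for $P_n^{\psi_0}$ and reduce the positivity of $\Delta_n^{\psi_0}$ to a transparent monotonicity comparison. I would first record the closed form $P_n^{\psi_0}(x)=\frac{1}{n!}\prod_{k=0}^{n-1}(x+k)$, which follows by a short induction on the recursion (\ref{polynomials}) with $g\equiv 1$ and is the content of Theorem~\ref{Pochhammer}. Fixing $x>0$ and writing the consecutive ratio $Q_k:=P_k^{\psi_0}(x)/P_{k-1}^{\psi_0}(x)=(x+k-1)/k=1+(x-1)/k$, one has $P_n^{\psi_0}(x)=\prod_{k=1}^{n}Q_k$ and $P_{n+1}^{\psi_0}(x)=Q_{n+1}\,P_n^{\psi_0}(x)$. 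Since $P_n^{\psi_0}(x)>0$ for $x>0$, factoring $P_n^{\psi_0}(x)^n$ out of (\ref{delta}) gives $\Delta_n^{\psi_0}(x)=P_n^{\psi_0}(x)^n\bigl(P_n^{\psi_0}(x)-Q_{n+1}^{\,n}\bigr)$, so on $x>0$ the sign of $\Delta_n^{\psi_0}(x)$ equals the sign of $\prod_{k=1}^{n}Q_k-Q_{n+1}^{\,n}$.

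The heart of the argument is the observation that for $x>1$ the map $k\mapsto Q_k=1+(x-1)/k$ is strictly decreasing and positive, so $Q_k>Q_{n+1}$ for every $1\le k\le n$. Multiplying these $n$ strict inequalities gives $\prod_{k=1}^{n}Q_k>Q_{n+1}^{\,n}$, hence $\Delta_n^{\psi_0}(x)>0$ for all $x>1$ and all $n\ge 1$. At the boundary $x=1$ every factor collapses to $Q_k=1$, so $P_n^{\psi_0}(1)=1$ and $\Delta_n^{\psi_0}(1)=1^{n+1}-1^{n}=0$. Thus $\Delta_n^{\psi_0}$ vanishes at $x=1$ and is strictly positive to its right; since $P_n^{\psi_0}$ has its only zeros at $x\in\{0,-1,\dots,-(n-1)\}$, all at most $0$, no real zero of $\Delta_n^{\psi_0}$ exceeds $1$, and $x_n^{\psi_0}=1$ as claimed.

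Alternatively, the same two inputs feed the machinery of Section~2: the identity $Q_n-Q_{n+1}=(x-1)/\bigl(n(n+1)\bigr)$ shows $\{P_n^{\psi_0}(x)\}_n$ is log-concave exactly when $x\ge 1$, while the initial check $P_1^{\psi_0}(x)=x>\sqrt{x(x+1)/2}=\sqrt{P_2^{\psi_0}(x)}$ holds exactly when $x>1$, which is the condition (\ref{special}) for $n_0=2$; Proposition~\ref{prop: strategy} then forces $\{\sqrt[n]{P_n^{\psi_0}(x)}\}_n$ to be strictly decreasing, equivalently $\Delta_n^{\psi_0}(x)>0$ by (\ref{equalities}). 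I would nonetheless present the direct product argument as the main proof, since it is self-contained and handles $n=1$ uniformly. There is no serious obstacle; the only point needing care is the reduction in the first paragraph---checking that factoring out $P_n^{\psi_0}(x)^n$ is legitimate (it is, since $x>0$ keeps every factor positive) and that the resulting inequality is genuinely equivalent to the sign of $\Delta_n^{\psi_0}(x)$.
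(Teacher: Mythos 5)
Your proof is correct and follows essentially the same route as the paper: both factor $\Delta_n^{\psi_0}(x)=P_n^{\psi_0}(x)^n\bigl(P_n^{\psi_0}(x)-((x+n)/(n+1))^n\bigr)$ using the Pochhammer closed form, observe the common value $1$ of both terms at $x=1$, and for $x>1$ compare the $n$ factors $1+(x-1)/k$ termwise against $1+(x-1)/(n+1)$. The only differences are cosmetic (your ratio notation $Q_k$ and the optional log-concavity detour), so there is nothing to add.
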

\begin{proof}
We obtain
\begin{equation*}
\Delta_n^{\psi _{0}}\left( x\right) =
\left(
\prod _{k=0}^{n-1}\frac{x+k}{k+1}\right) ^{n+1}-\left(
\prod _{k=0}^{n}\frac{x+k}{k+1}\right) ^{n}=\left( \prod _{k=0}^{n-1}\frac{x+k}{k+1}\right) ^{n}\left( \prod _{k=0}^{n-1}\frac{x+k}{k+1}-\left(
\frac{x+n}{n+1}\right) ^{n}\right).
\end{equation*}
Therefore, its zeros are $0,-1,\ldots ,1-n$,
each $n$fold and the zeros of
$$\Pi
_{n}\left( x\right) =
\prod _{k=0}^{n-1}\left( \frac{x-1}{k+1}+1\right) -\left( \frac{x-1}{n+1}+1\right) ^{n}.$$
Note that it has a zero at $x=1$
since both terms
in the difference are $1$ in
this case. If $x>1$ and $k<n$,
then $\frac{x-1}{k+1}>\frac{x-1}{n+1}$.
Thus, $\Pi
_{n}\left( x\right) >0$,
confirming that $x=1$ is
the largest zero in this case.
\end{proof}
\subsection{Largest real zero of $\Delta_n^{\psi_1}(x)$}
\label{ell=1}
Let $g(n)=\psi_1(n)$. Then, the associated
polynomials $P_n^{\psi_1}(x)$
are
related to $1$-associated Laguerre polynomials $L_{n-1}^{(1)}(x)$. We refer to
Section \ref{sect:pola} and \cite{HNT22}.
\begin{proof}[Proof of Theorem \ref{th:ell=1}]
The polynomials $P_n^{\psi_1}(x)$ satisfy Tur\'{a}n inequalities (\cite{HNT22}, Theorem 1.4) for all non-negative real numbers $x$ and $n\geq 2$:
\begin{equation*}
P_n^{\psi_1}(x)^2 \geq P_{n-1}^{\psi_1}(x) \, P_{n+1}^{\psi_1}(x).
\end{equation*}
Nevertheless, the initial condition (\ref{special}) has some {\it delay\/}.
It is sufficient to consider the plot Figure \ref{figure:Laguerre}
with $1 \leq n \leq 25$. We have $x_1^{\psi_1}= \psi_1(2)=2$ and $x_n^{\psi_1}<2$ otherwise.
Therefore,
we can conclude the result for all $x \geq 2$ for $n
\geq 2$. Further, for all $n\geq 6$, we have $x_n^{\psi_1} <1$. 
\begin{figure}[H]
\includegraphics[width=.5\textwidth]{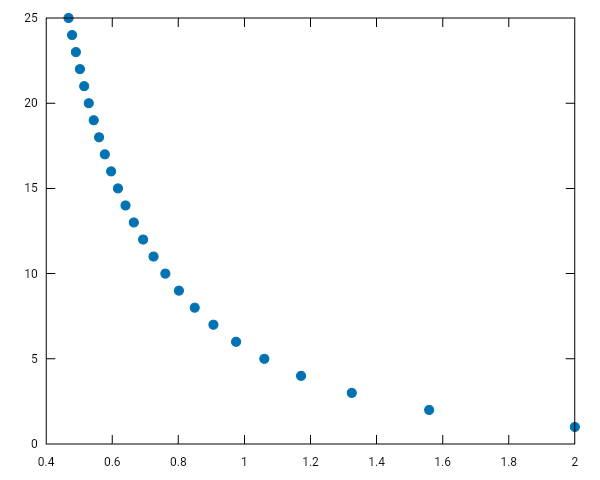}
\caption{\label{figure:Laguerre}Largest real zeros of 
$\Delta_n^{\psi_1}(x)$ for $1 \leq n \leq 25$}
\end{figure}
\end{proof}
\subsection{Largest real zero of $\Delta_n^{\psi_2}(x)$}
We add the case $g(n)=\psi_2(n)=n^2$, 
which shows the same pattern as for
$\psi _{1}\left( n\right) $.
\begin{figure}[H]
\includegraphics[width=.5\textwidth]{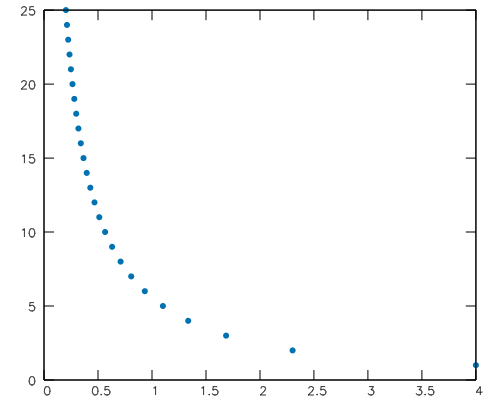}
\caption{\label{figure:psi2}Largest real zeros of 
$\Delta_n^{\psi_2}(x)$ for $1 \leq n \leq 25$}
\end{figure}

\section{Plane
partitions and
overpartitions}
\subsection{Proof of Theorem \ref{th:plane}}
\begin{proof}
We know already by Ono, Pujahari, and Rolen \cite{OPR22} 
that $\{pp(n)\}_n$ is log-concave for $n \geq 12$.
This has been previously conjectured by Tr{\"o}ger and the two authors of this paper (\cite{HNT22}, Section 1.2).
Since $x_n^{\sigma_2}$ is positive and smaller than one for $6 \leq n \leq 11
$
(see Figure \ref{figure:plane}) we can apply Proposition \ref{prop: strategy} and finalize the proof.
\end{proof}
It follows from Figure \ref{figure:plane} that 
\begin{equation} \label{plane:x}
P_n^{\sigma_2}(x)^{n+1} > P_{n+1}^{\sigma_2}(x)^{n} 
\end{equation}
for $x \geq 1$ and all $6
\leq n \leq 25$.
Therefore, (\cite{HNT22}, Conjecture 2 in Section 1.3.2) would imply
that the polynomization (\ref{plane:x})
of  Theorem \ref{th:plane} is also valid
for all $n \geq 6$ and $x \geq 1$. 
\subsection{Proof of Theorem \ref{th:over}}
\begin{proof}
The overpartition function $\bar{p}(n)$ 
is log-concave for all $n \in \mathbb{N}$.
Note that $\bar{p}(0)=1$,  $\bar{p}(1)=2$ and $\bar{p}(2)=4$.
\begin{theorem}[Engel \cite{En17}]
The function $\bar{p}(n)$ is log-concave for $n \geq 1$.
\end{theorem}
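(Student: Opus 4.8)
The plan is to prove log-concavity by a precise asymptotic analysis of $\bar{p}(n)$, following the template that DeSalvo and Pak \cite{DP15} used for the ordinary partition function, and to split the statement into a tail estimate valid for all large $n$ together with a finite direct check of the initial cases. The starting point is that
\begin{equation*}
\sum_{n=0}^{\infty}\bar{p}(n)\,q^n = \prod_{n=1}^{\infty}\frac{1+q^n}{1-q^n} = \prod_{n=1}^{\infty}\frac{1-q^{2n}}{(1-q^n)^2} = \frac{\eta(2\tau)}{\eta(\tau)^2}
\end{equation*}
with $q=e^{2\pi i\tau}$, so $\bar{p}(n)$ is the $n$th Fourier coefficient of a weight $-1/2$ weakly holomorphic modular form. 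First I would write down the corresponding Hardy--Ramanujan--Rademacher type exact formula \cite{HR17,Ra43}: its leading ($k=1$) term produces the main asymptotic $\bar{p}(n)\sim \tfrac{1}{8n}e^{\pi\sqrt{n}}$, while the terms with $k\ge 2$ are exponentially smaller and can be absorbed into a controlled remainder.

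From the exact formula I would extract an explicit two-sided bound
\begin{equation*}
f_-(n)\le \bar{p}(n)\le f_+(n),\qquad f_{\pm}(n)=\frac{1}{8n}\,e^{\pi\sqrt{n}}\bigl(1+c_1 n^{-1/2}\pm E\,n^{-1}\bigr),
\end{equation*}
valid for $n\ge N_0$ with an effective constant $E$ and an effective threshold $N_0$. The desired inequality $\bar{p}(n)^2\ge \bar{p}(n-1)\,\bar{p}(n+1)$ is then implied by $f_-(n)^2\ge f_+(n-1)\,f_+(n+1)$. Taking logarithms, the decisive quantity is the second difference of $\pi\sqrt{n}$,
\begin{equation*}
2\pi\sqrt{n}-\pi\sqrt{n-1}-\pi\sqrt{n+1}=\frac{\pi}{4}\,n^{-3/2}+O\!\left(n^{-5/2}\right)>0,
\end{equation*}
which is strictly positive by the strict concavity of $t\mapsto\sqrt{t}$. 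The only contributions working against log-concavity come from the algebraic prefactor $\tfrac{1}{8n}$, whose logarithm has second difference $\log\frac{n^2}{n^2-1}=O(n^{-2})$, and from the factor $1+c_1 n^{-1/2}\pm E n^{-1}$, whose logarithmic second difference is of smaller order still; for $n$ large the $\tfrac{\pi}{4}n^{-3/2}$ gap dominates both, which yields the inequality.

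The argument then separates cleanly. For $n\ge N_0$ the elementary comparison between $f_-$ and $f_+$ settles the claim, and effectiveness of $E$ and $N_0$ guarantees that only finitely many values remain. For $1\le n<N_0$ one checks $\bar{p}(n)^2\ge \bar{p}(n-1)\,\bar{p}(n+1)$ directly from the generating function; here one sees the two boundary equalities $\bar{p}(1)^2=\bar{p}(0)\,\bar{p}(2)=4$ and $\bar{p}(2)^2=\bar{p}(1)\,\bar{p}(3)=16$, after which the inequality becomes strict (for example $\bar{p}(3)^2=64>56=\bar{p}(2)\,\bar{p}(4)$), consistent with the $\ge$ in the statement.

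I expect the main obstacle to be the second step: converting the soft asymptotic into fully explicit, rigorous error bounds with an honest numerical value of $N_0$. The conceptual core --- that strict concavity of $\sqrt{n}$ forces log-concavity while the algebraic correction $\tfrac{1}{8n}$ is too weak to interfere --- is straightforward, and the heuristic crossover $\tfrac{\pi}{4}n^{-3/2}>n^{-2}$ already occurs near $n\approx 2$, which is encouraging for keeping $N_0$ small. The delicate work is bounding the Bessel-function main term and the $k\ge 2$ tail of the exact formula uniformly, so that the residual finite check is genuinely feasible. The simpler shape of the main term for $\bar{p}(n)$, compared with $p(n)$, should make this bookkeeping lighter than in \cite{DP15}.
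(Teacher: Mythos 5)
The first thing to note is that the paper never proves this statement: it is Engel's theorem, quoted with attribution to \cite{En17} inside the proof of Theorem \ref{th:over}, where it supplies the log-concavity hypothesis needed to run Proposition \ref{prop: strategy}. So the relevant comparison is with Engel's published proof, and your plan is essentially that proof. Engel works from the Rademacher--Zuckerman exact formula attached to the weight $-1/2$ eta-quotient $\eta(2\tau)/\eta(\tau)^2$ (your identification of the generating function is correct, including the absence of any fractional power of $q$), isolates the main term $\frac{1}{8n}e^{\pi\sqrt{n}}$, extracts explicit two-sided bounds, and wins because the second difference of $\pi\sqrt{n}$, of size $\frac{\pi}{4}n^{-3/2}$, dominates the $O(n^{-2})$ loss coming from the prefactor $\frac{1}{8n}$; a finite direct check handles small $n$. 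Your boundary data are also right and consistent with Table \ref{Pn}: equality holds at $n=1$ and $n=2$ (namely $4=4$ and $16=16$), and strict inequality begins at $n=3$ with $64>56$. Your remark that the exponent for overpartitions is exactly $\pi\sqrt{n}$, with no $-\tfrac{1}{24}$-type shift, and that this lightens the bookkeeping relative to $p(n)$, is likewise accurate.

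The one genuine reservation is that what you have written is an architecture, not yet a proof: every quantitative ingredient --- the constant $c_1$, the effective error constant $E$, the threshold $N_0$, and the uniform bound on the $k\ge 2$ terms of the exact formula --- is deferred, and you acknowledge this yourself. That deferred work is not routine bookkeeping; it is the actual substance of Engel's paper, just as the analogous explicit estimates are the substance of DeSalvo--Pak \cite{DP15} for $p(n)$. None of the steps you propose would fail, since this is precisely the strategy that succeeds in the literature, but as it stands your argument establishes the theorem only modulo that analytic work, so it should be regarded as a correct and well-aligned plan rather than a complete proof.
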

We have that $x_1^{\bar{g}}=x_2^{\bar{g}}=2$ and $x_n^{\bar{g}}<2$ 
for $ 3 \leq n \leq 25$.
Again, applying Proposition \ref{special} proves
Theorem~\ref{th:over}.
\end{proof}

\section{Commuting $\ell$-tuples of
permutations}
We consider the double sequence $\{N_{\ell}(n)\}$ of positive integers
provided by properties of the symmetric group $S_n$ acting
on $n$ elements and its abelian subgroups,
as measured
by its generators. We refer to
\cite{Ab24,ABDV24}.
Let $\ell,n  \geq 1$ and 
\begin{equation*}
C_{\ell,n}:= \left\{ \pi=(\pi_1, \ldots, \pi_{\ell}) \in S_n^{\ell}\, : \,
\pi_j \pi_k = \pi_k \pi_j \text{ for } 1 \leq j,k \leq \ell \right\}
.
\end{equation*}
Each
$\pi \in C_{\ell,n}$ corresponds to a group homomorphism from
$\mathbb{Z}^{\ell} $ to $S_n$. We
then define
\begin{equation*}
N_{\ell}(n):= \frac{\vert C_{\ell,n} \vert}{n!} 
=
\frac{\left\vert 
\func{Hom}\left( \mathbb{Z}^{\ell},S_n\right) \right\vert
}{n!}  \in \mathbb{N}, \text{ with }  N_{\ell}(0):=1.
\end{equation*}
Dey \cite{De65} and Wohlfahrt \cite{Wo77}
offer a compelling
recursive approach
determining
the number of homomorphisms $n! \, N_G(n):=  \left\vert \func{Hom}\left( G,S_n\right) \right\vert$ 
of a finitely generated group $G$ into $S_n$. Let $g_G(n)$ denote the
number of subgroups of $G$
with index $n$.
Notably, there has been
extensive research
by Lubotzky and Segal \cite{LS03}
studying the growth of these numbers in
the context of
characterizing the group $G$ itself:
\begin{equation} 
\label{eq:Wohlfahrt}
N_G(n) = \frac{1}{n} \sum_{k=1}^n g_G(k) \, N_G(n-k), 
\end{equation}
with the
initial value $N_G(0):=1$. 
Let $G=\mathbb{Z}^{\ell}$
and $g_{\ell}(n):= g_G(n)$. Then $g_1(n)=1$ and $g_2(n) = \sigma(n) = \sum_{d
\mid n} d$.
Therefore, we recover in the case $\ell=2$ from (\ref{eq:Wohlfahrt})  Euler's
well known recursion formula for the partition numbers $p(n)$ \cite{An98, On04}.
The case $\ell =3$ reveals a connection to topology,
as highlighted by Britnell (see the introduction of \cite{Br13}) due to work by
Liskovets and Medynkh \cite{LM09}. They discovered that  
$N_{3}(n)$ counts the number
of non-equivalent $n$-sheeted coverings of a torus.
Additionally, 
the numbers $N_{\ell}(n)$ were
identified by Bryan and Fulman \cite{BF98}
as the $n$th orbifold characteristics. This generalizes
the work by
Hirzebruch and H\"ofer \cite{HH90} concerning the ordinary
and string-theoretic Euler characteristics of symmetric products.

To illustrate the growth of $\{N_{\ell}(n)\}_{\ell,n}$,
we present
the first values for $0 \leq n \leq 10$ and  $2 \leq \ell \leq 4$ in Table \ref{tab:Nell}. 
Note that $N_{1}(n)=1$.
%
\begin{table}[H]
\[
\begin{array}{cccccccccccc}
\hline
n & 0 & 1 & 2 & 3 & 4 & 5 & 6 & 7 & 8 & 9 & 10 \\ \hline \hline
N_2(n) & 1 & 1 & 2 &
3 &
5 &
7 &
11 &
15 &
22 & 30 & 42 \\
N_3(n) & 1 & 1 &4&8&21&39&92&170&360&667&1316 \\
N_4(n) &1 & 1 &8&21&84&206&717&1810&5462&13859&38497 \\
\hline
\end{array}
\]
\caption{\label{tab:Nell}
Values for $ 0 \leq n \leq 10$
}
\end{table}

Let $g_{\ell}(n)$ be the number of subgroups of $\mathbb{Z}^{\ell}$ of index $n$. In this subsection,
we examine
the cases $\ell=3$
and $\ell=4$. See Figure \ref{figure:malek3} and Figure \ref{fig:malek4}.
\begin{figure}[H]
\includegraphics[width=.5\textwidth]{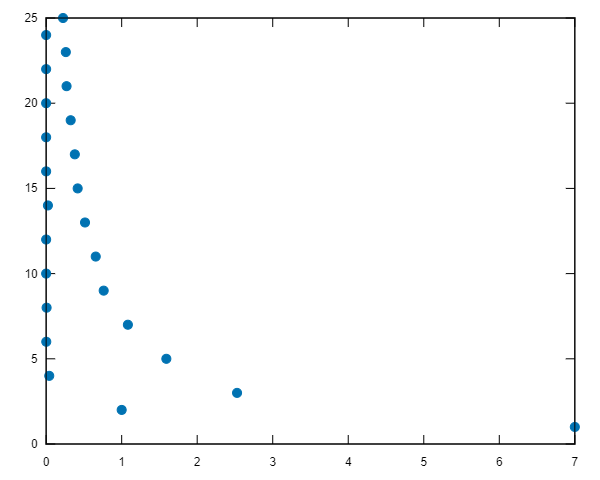}
\caption{\label{figure:malek3}Largest real zeros of 
$\Delta _{n}^{g_{3}
}\left( x\right) =
\left( P_{n}^{g_{3}
}\left( x\right) \right) ^{n+1}-\left( P_{n+1}^{g_
{3}}\left( x\right) \right) ^{n}$ with $n$ vertical}
\end{figure}

\begin{figure}[H]
\includegraphics[width=.5\textwidth]{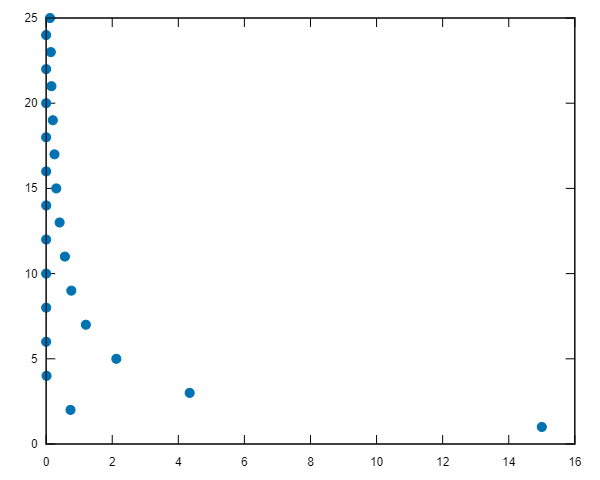}
\caption{\label{fig:malek4}Largest real zeros of $\Delta _{n}^{g_{4}}\left( x\right) =\left( P_{n}^{g_{4}}\left( x\right) \right) ^{n+1}-\left( P_{n+1}^{g_{4}}\left( x\right) \right) ^{n}$ with $n$ vertical}
\end{figure}

\begin{theorem} 
Let $8
\leq n \leq 25$. Then $\Delta _{n}^{g_3}\left( x\right)>
0$ for 
all $x \geq 1$. Let $8
\leq n \leq 25$.
Then
$\Delta _{n}^{g_4}\left( x\right)>
0$
for all $x\geq 1$.
\end{theorem}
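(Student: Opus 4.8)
The plan is to read the statement off Lemma~\ref{lemma:largest} after a finite, certified computation. Since the leading coefficient of $\Delta_n^g(x)$ is positive and $x_n^{g}$ is its largest real zero, Lemma~\ref{lemma:largest} gives $\Delta_n^g(x)>0$ for every $x>x_n^g$; moreover a real zero always exists because $\Delta_n^g(0)=0$. Hence, for each fixed $n$, the assertion ``$\Delta_n^{g_\ell}(x)>0$ for all $x\geq 1$'' is equivalent to the single inequality $x_n^{g_\ell}<1$. As the range $8\leq n\leq 25$ is finite, it suffices to certify, for the eighteen values $n=8,\dots,25$ and for each input $\ell\in\{3,4\}$, that the largest real zero of $\Delta_n^{g_\ell}$ lies strictly below $1$. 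This is precisely the behaviour plotted in Figure~\ref{figure:malek3} and Figure~\ref{fig:malek4}; the work is to replace that graphical evidence by a rigorous check.

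First I would produce the arithmetic data in exact integer arithmetic: starting from $g_1(n)\equiv 1$ and the nested divisor recursion $g_\ell(n)=\sum_{d\mid n} d\,g_{\ell-1}(d)$, I would tabulate $g_3(k)$ and $g_4(k)$ for $1\leq k\leq 26$. Feeding these into the defining recursion (\ref{polynomials}),
\begin{equation*}
P_n^{g_\ell}(x)=\frac{x}{n}\sum_{k=1}^n g_\ell(k)\,P_{n-k}^{g_\ell}(x),\qquad P_0^{g_\ell}(x)=1,
\end{equation*}
I would generate the polynomials $P_n^{g_\ell}(x)\in\mathbb{Q}[x]$ for $0\leq n\leq 26$ and then form $\Delta_n^{g_\ell}(x)=\bigl(P_n^{g_\ell}(x)\bigr)^{n+1}-\bigl(P_{n+1}^{g_\ell}(x)\bigr)^{n}$. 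Since $g_\ell$ is positive, induction on (\ref{polynomials}) shows every $P_m^{g_\ell}$ has strictly positive coefficients, so in particular $P_m^{g_\ell}(x)=x\,q_m(x)$ with $q_m(0)>0$, and $\Delta_n^{g_\ell}(x)=x^{n}\bigl(x\,q_n(x)^{n+1}-q_{n+1}(x)^{n}\bigr)$; the bracket at $x=1$ is the integer quantity $N_\ell(n)^{n+1}-N_\ell(n+1)^{n}$ read off from Table~\ref{tab:Nell}.

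To certify $x_n^{g_\ell}<1$ I would show, in exact arithmetic, that $\Delta_n^{g_\ell}$ has no real zero in $[1,\infty)$ and that $\Delta_n^{g_\ell}(1)>0$; since $\Delta_n^{g_\ell}(x)\to+\infty$, this forces every real zero below $1$. Concretely, after the substitution $x=1+y$ I would isolate the real roots of the shifted polynomial $\Delta_n^{g_\ell}(1+y)\in\mathbb{Q}[y]$ by a Sturm sequence (or any certified root-isolation routine) and verify that none is non-negative; stripping the factor $x^{n}$ first reduces the work to the bracket above.

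The main obstacle is the sheer size of the computation rather than any conceptual difficulty: $\Delta_{25}^{g_\ell}$ has degree $25\cdot 26=650$ with large rational coefficients, so naively expanding the power difference and then running Sturm's algorithm is expensive. I expect the practical refinement is to avoid expanding the difference of powers altogether and instead verify the equivalent inequality $\tfrac1n\log P_n^{g_\ell}(x)>\tfrac1{n+1}\log P_{n+1}^{g_\ell}(x)$ on $[1,\infty)$ by certified interval-arithmetic evaluation on a sufficiently fine grid together with an explicit derivative bound, which keeps every degree at most $26$. In either formulation the argument is a finite, verifiable check: the content of the theorem is that the numerical picture of Figures~\ref{figure:malek3}--\ref{fig:malek4} is genuine, so the effort lies entirely in making the root-localization certified rather than in finding a new idea.
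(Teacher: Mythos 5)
Your proposal is correct and follows essentially the same route as the paper: the theorem is established by a finite computation certifying $x_n^{g_\ell}<1$ for $8\leq n\leq 25$ and $\ell\in\{3,4\}$, combined with Lemma~\ref{lemma:largest}, which is exactly what the paper does (implicitly, via the computations underlying Figures~\ref{figure:malek3} and~\ref{fig:malek4}). Your write-up is in fact more explicit than the paper about how to make the root localization rigorous (exact arithmetic, Sturm sequences on the shifted polynomial), which is a welcome refinement rather than a departure.
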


Ultimately,
we state the following speculation
which
appears to be evident
from Figure \ref{figure:malek3} and Figure \ref{fig:malek4}.

\begin{conjecture} Let $a_3=
8$ and $a_4=8
$. Let $\ell=3$
or $\ell=4$.
Then $\Delta _{n}^{g_{\ell}}\left( x\right)
>0
$ for all $x\geq 1$ for $n\geq
a_{\ell }$.
\end{conjecture}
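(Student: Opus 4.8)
The plan is to reduce the stated positivity to a strict bound on the largest real zero, and then treat $n$ in two regimes: a finite window handled by a rigorous computation, and a tail handled by the log-concavity method. By Lemma~\ref{lemma:largest} we have $\Delta_n^{g_\ell}(x)\to\infty$ as $x\to\infty$ and $\Delta_n^{g_\ell}(x)>0$ for every $x>x_n^{g_\ell}$. Hence asking that $\Delta_n^{g_\ell}(x)>0$ for all $x\geq 1$ is exactly the \emph{strict} bound $x_n^{g_\ell}<1$, the strictness being precisely what secures positivity at the endpoint $x=1$. So the target becomes $x_n^{g_\ell}<1$ for $\ell\in\{3,4\}$ and all $n\geq 8$. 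For the window $8\leq n\leq 25$ I would generate $P_n^{g_\ell}(x)$ and $P_{n+1}^{g_\ell}(x)$ from the recursion~(\ref{polynomials}) with input $g_\ell$ (the index-$n$ sublattice counts of $\mathbb{Z}^\ell$), form $\Delta_n^{g_\ell}$, and verify via interval arithmetic that its largest real zero lies strictly below $1$, making rigorous what Figures~\ref{figure:malek3} and~\ref{fig:malek4} display.

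For the tail $n\geq 26$ I would apply Proposition~\ref{prop: strategy} \emph{pointwise} in $x$. Fixing $x\geq 1$ and setting $\alpha(n):=P_n^{g_\ell}(x)>0$, if $\{P_n^{g_\ell}(x)\}_n$ is log-concave for $n\geq n_0$ and the initial condition $R_\alpha(n_0-1)>R_\alpha(n_0)$ holds, then Proposition~\ref{prop: strategy} makes $\{\sqrt[n]{P_n^{g_\ell}(x)}\}$ strictly decreasing for $n\geq n_0-1$, which is $P_n^{g_\ell}(x)^{n+1}>P_{n+1}^{g_\ell}(x)^n$ and hence $x_n^{g_\ell}<1$. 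The base value $x=1$ is the assertion that $\sqrt[n]{N_\ell(n)}$ is eventually strictly decreasing, where $N_\ell(n)=P_n^{g_\ell}(1)$ by~(\ref{eq:Wohlfahrt}). To obtain log-concavity of $\{N_\ell(n)\}$ for large $n$ I would analyze the generating function $\sum_n N_\ell(n)t^n=\exp\!\big(\sum_{k\geq 1}\tfrac{g_\ell(k)}{k}t^k\big)$, which is an infinite product of Meinardus type; since the exponents are governed by the Dirichlet series $\sum_n g_\ell(n)n^{-s}=\prod_{j=0}^{\ell-1}\zeta(s-j)$, a Meinardus/saddle-point analysis yields asymptotics of $N_\ell(n)$ sharp enough to force log-concavity for $n\gg 1$, in exactly the manner Ono, Pujahari and Rolen \cite{OPR22} used for plane partitions. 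Any residual gap between $25$ and the onset of this asymptotic regime is finite and checked directly.

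The hard part will be upgrading log-concavity from the single point $x=1$ to \emph{all} $x\geq 1$, i.e. the polynomial inequality $P_n^{g_\ell}(x)^2\geq P_{n-1}^{g_\ell}(x)\,P_{n+1}^{g_\ell}(x)$ for $x\geq 1$ and $n\geq n_0$, together with a uniform initial condition. I see two routes: \emph{(a)} prove a Tur\'an-type inequality for the $g_\ell$-D'Arcais polynomials on $x\geq 0$, mirroring the Laguerre input used for Theorem~\ref{th:ell=1} \cite{HNT22}, which would supply the log-concavity directly and is cleanest if the three-term structure persists for $g_\ell$; or \emph{(b)} establish log-concavity at each integer $x=k$, reading $P_n^{g_\ell}(k)$ as a $k$-coloured analogue of the commuting-tuple count, by a Bringmann--Kane--Rolen--Tripp-type argument \cite{BKRT21}, and then interpolate to non-integer $x$. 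The genuine obstruction is that, unlike for $\sigma$ where deep asymptotic machinery is already in place, no such uniform log-concavity is presently known for $g_3$ or $g_4$; moreover the initial condition exhibits the same \emph{delay} as in the Laguerre case, which is precisely why the threshold is $a_\ell=8$ rather than $6$, and confirming that delay against the computed zeros is the final check required to close the argument.
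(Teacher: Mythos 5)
The statement you are attacking is left \emph{open} in the paper: it appears there as a conjecture, called a ``speculation'' supported by Figures \ref{figure:malek3} and \ref{fig:malek4}, and the only thing the paper proves is the finite window $8\leq n\leq 25$ (the theorem immediately preceding the conjecture, which is computational). Your reduction of the claim to the strict bound $x_n^{g_\ell}<1$ via Lemma \ref{lemma:largest}, and your interval-arithmetic treatment of the window $8\leq n\leq 25$, are sound and coincide with what the paper actually establishes; the issue is everything after that.

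The tail $n\geq 26$ is a genuine gap, as you yourself partly concede. Your plan rests on two unproven inputs. First, log-concavity of $\{N_\ell(n)\}_n$ for $\ell=3,4$ and large $n$ is not in the literature; indeed the cited work \cite{ABDV24} formulates log-concavity for commuting tuples as a \emph{conjecture}, and the Meinardus/saddle-point route you sketch is not routine here: the Dirichlet series $\prod_{j=0}^{\ell-1}\zeta(s-j)$ has poles at $s=1,2,\dots,\ell$, so the classical Meinardus framework (built for a single dominant pole) does not apply off the shelf, and forcing log-concavity requires two-term asymptotics with explicit error control --- i.e.\ the analogue of the entire paper \cite{OPR22} redone for a new family. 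Second, and more seriously, to get $\Delta_n^{g_\ell}(x)>0$ for \emph{all} $x\geq 1$ rather than just $x=1$, your pointwise use of Proposition \ref{prop: strategy} needs log-concavity of $n\mapsto P_n^{g_\ell}(x)$ with a threshold $n_0$ and an initial condition $R_\alpha(n_0-1)>R_\alpha(n_0)$ that are \emph{uniform} in $x\geq 1$; no Tur\'an-type or Bringmann--Kane--Rolen--Tripp-type result is known for $g_3$ or $g_4$, and ``interpolating to non-integer $x$'' from integer values is not a valid inference --- positivity of a polynomial expression at all integers does not give positivity on $[1,\infty)$. So what you have is a correct reduction plus a research program whose key ingredients remain conjectural, which is consistent with the fact that the paper itself leaves the statement open.
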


%
%


{\bf Acknowledgments.}
The authors thank Shane Chern for informing us
about the work by
Wang and Zhu
and Chen and Zheng, after a first version of the paper had been distributed
and Steven Charlton for supporting us with the computations of
values of the $x_{n}^{\sigma }$.

\end{document}